\theoremstyle{definition}
\newtheorem{dfn}{Definition}
\newtheorem{rmk}{Remark}
\theoremstyle{plain}
\newtheorem{thm}{Theorem}[section]
\newtheorem{lem}[thm]{Lemma}
\newtheorem{cor}[thm]{Corollary}
\newtheorem{prop}[thm]{Proposition}
\DeclareMathOperator{\ShExt}{\mathscr{E}\kern -0.5pt xt}
\DeclareMathOperator{\ShHom}{\mathscr{H}\kern -0.5pt om}
\begin{document}

\title[Dot Product Graph]%
{Properties of the Dot Product Graph \\of a Commutative Ring}

\author[Mohsen Mollahajiaghaei]{Mohsen Mollahajiaghaei}
\address{Department of Mathematics,
University of Western Ontario,\\ London, Ontario, Canada N6A 5B7}
\email{\href{mailto:mmollaha@uwo.ca}{mmollaha@uwo.ca}}


\subjclass[2010]{05C25, 05C69, 13A15}

\keywords{dot product graph; domination number, clique and independence number; planar graph}

\begin{abstract}
Let $R$ be a commutative ring with identity and $n\geq1$ be an integer. Let $R^{n}=R\times\cdots\times R~(n~times)$. The \textit{total dot product} graph, denoted by $TD(R,n)$ is a simple graph with elements of $R^{n}-\{(0,0,\ldots,0)\}$ as vertices, and two distinct vertices $\mathbf{x}$ and $\mathbf{y}$ are adjacent if and only if $\mathbf{x} \cdot \mathbf{y}=0\in R$, where $\mathbf{x} \cdot \mathbf{y}$ denotes the dot product of $\mathbf{x}$ and $\mathbf{y}$. In this paper, we find the structure of $TD(R\times S,n)$ with respect to the structure of $TD(R,n)$ and $TD(S,n)$. In addition, we find the degree of vertices of this graph. We determine when it is regular. Let $\mathbb{F}$ be a finite field. It is shown that if $TD(\mathbb{F},n)\simeq TD(R,m)$, then $n=m$ and $R\simeq\mathbb{F}$. A number of results concerning the domination number are also presented. Furthermore, we give some results on the clique and the independence number of $TD(R,n)$. It is shown that the ring $R$ is finite if and only if its independence number is finite. Finally, we classify all planar graphs within this class.
\end{abstract}
\maketitle
\setcounter{tocdepth}{1}
\section{Introduction}
There are many papers purporting to study the interplay between commutative rings
and combinatorics – typically, these involve starting with a ring and studying some
graph associated to it (e.g. zero-divisor graph, unitary Cayley graph). By virtue of their
definition, most of these graphs have a lot of symmetry, and hence lend themselves
well to the computation of various combinatorial invariants; this pursuit has attracted
the attention of many people in the last three decades, see \cite{Anderson,Anderson1,Badawi,Beck,domintotal,our paper,elect,Rocky}.

Let $R$ be a commutative ring with nonzero identity and $n \geq 1$ be an integer. 
Let $R^{n}=R\times \cdots \times R~(n~ times)$. Badawi \cite{Badawi} introduced the \textit{total dot product} graph, denoted by $TD(R,n)$, as a simple graph with elements of $R^{n}-\{(0,0,\ldots,0)\}$ as vertices, and two distinct vertices $\mathbf{x}$ and $\mathbf{y}$ are adjacent if and only if $\mathbf{x}\cdot\mathbf{y} =0 \in R$, where $\mathbf{x}\cdot \mathbf{y}$ denotes the dot product of $\mathbf{x}$ and $\mathbf{y}$. For example, figure (\ref{fig:sample1}) depicts $TD(\mathbb{Z}_2,3)$. In \cite{Badawi}, it was shown that the diameter of this graph for $n\geq 3$ is $3$. Also, for $n=2$, the diameter was determined. In addition, the girth of this graph was studied.

By the \textit{zero-divisor graph} $\Gamma(R)$ of $R$, we mean the graph with vertices $Z(R)-\{0\}$ such that there is an (undirected) edge between vertices $a$ and $b$ if and only if $a\neq b$ and $ab=0$.
For an arbitrary natural number $n$ and ring $R$, it can be easily seen that there exist $n$ mutually distinct copies of $\Gamma(R)$ in $TD(R,n)$.

Throughout this paper, we use $N(v)$ for the neighborhood of a vertex (i.e. the set of vertices adjacent to $v$). For a graph $G$, let $V(G)$ denote the set of vertices. The \textit{tensor product} of $G_{1}$ and $G_{2}$, $G_{1} \otimes G_{2}$, is the
graph with vertex set $V(G_{1}\otimes G_{2}):=V(G_{1})\times
V(G_{2})$, specified by putting $(u,v)$ adjacent to $(u',v')$ if
and only if $u$ is adjacent to $u'$ in $G_{1}$ and $v$ is
adjacent to $v'$ in $G_{2}$.

A set $D$ of vertices of a graph $G$ is said to be \textit{dominating} if every vertex of $V(G)-D$ is adjacent to a vertex of $D$, and the \textit{domination number} $\gamma(G)$ is the minimum number of vertices of a dominating set in $G$. For a given graph $G$ and a natural number $k$, the decision problem testing whether $\gamma(G)\leq k$ was shown to be NP-complete \cite{NPdomin}.

A subset $I$ of $V(G)$ is said to be \textit{independent} if any two vertices in that subset are pairwise non-adjacent. The \textit{independence number} of a graph $G$, denoted by $\alpha(G)$, is the maximum size of an independent set of vertices in $G$.

A \textit{clique} is a set of pairwise adjacent vertices in a graph. The \textit{clique number} of a graph $G$, denoted by $\omega(G)$, is the size of the largest clique of $G$.

A \textit{planar graph} is a graph that can be embedded in the plane, i.e., it can be drawn on the plane in such a way that its edges intersect only at their endpoints. In other words, it can be drawn in such a way that no edges cross each other. For basic terminology regarding graphs, we refer the reader to \cite{west}.

Throughout this paper, $R$ is a finite commutative ring with identity. Here $R^{\ast}$ and $U(R)$ stand for $R-\{0\}$ and invertible elements of $R$, respectively. A ring $R$ is said to be \textit{reduced} if $R$ has no nonzero nilpotent element. So, a finite commutative reduced ring $R$ is a finite product of finite fields. Let $\mathbf{a}=(a_1,\ldots,a_n)\in R^n$, by $||\mathbf{a}||$ we denote $a_{1}^2+a_{2}^2+\cdots+a_{n}^2$. We shall denote by $\mathbb{Z}_n$ the ring of integers modulo $n$. Let $e_i$ ($i=1,\ldots,n$) be the element in $R^n$ such that $j$-coordinate is 0 for $j \neq i$, and $i$-coordinate is 1.

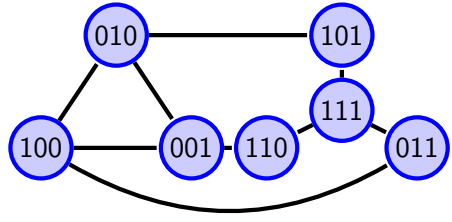
\begin{figure}
\begin{tikzpicture}[baseline=(current bounding box.center),scale=1, shorten >=0.8pt, auto, node distance=2cm, ultra thick]
   \begin{scope}[every node/.style={circle,draw=blue,fill=blue!20!,inner sep = 2pt, ,font=\sffamily}]  

\node (v1) at (3,3) {100};
    \node (v2) at (4,4.5) {010};
 \node (v3) at (5,3) {001};
 \node (v4) at (6,3) {110};
 \node (v5) at (7,4.5) {101};
         \node (v6) at (8,3) {011};    
             \node (v7) at (7,3.5) {111};    
    \draw  (v1) edge  (v2);
    \draw  (v1) edge  (v3);
    \draw  (v2) edge  (v3);

\draw  (v1) edge [bend right] (v6);
\draw  (v2) edge  (v5);
\draw  (v3) edge  (v4);

\draw  (v7) edge  (v4);
\draw  (v7) edge  (v5);
\draw  (v7) edge  (v6);

   \end{scope}
\end{tikzpicture}
\caption{$TD(\mathbb{Z}_2,3)$}\label{fig:sample1}
\end{figure}
Let $\overline{TD}(R,n)$ be the graph whose vertex set is $R^n$, and in which $\mathbf{x}$ is adjacent to $\mathbf{y}$ if and only if $\mathbf{x}\cdot \mathbf{y}=0$. Therefore we have loops. Let $G=\overline{TD}(R,n)$.  Remove the vertex with maximum degree and vertices with loops, so the new graph is $\overline{TD}(R,n)$. Thus, $\overline{TD}(R,n)$ and $TD(R,n)$ have a lot of similarities. Then, it is worthwhile to study $\overline{TD}(R,n)$. Figure (\ref{fig:sample2}) shows $\overline{TD}(\mathbb{Z}_2,3)$.

In section 2, we proceed with the study of the graph $\overline{TD}(R,n)$. In addition, we study the degree of vertices in $TD(R,n)$. Finally, we prove that if $TD(\mathbb{F},n)\simeq TD(R,m)$, where $\mathbb{F}$ is a finite field, then $n=m$ and $R\simeq\mathbb{F}$. Section 3 is devoted to the study of the domination number of $TD(R,n)$. We find the domination number of $TD(\mathbb{F},n)$. We give some upper bounds for an arbitrary ring. Moreover, we will discuss the domination number of $TD(R,n)$ for infinite rings. In the fourth section, we will look at the clique and independence number. The last section in this paper lists all planar graphs within this class.
\section{Degree sequence and $\overline{TD}(R,n)$}
It is natural to relate $\overline{TD}(R\times S,n)$ to $\overline{TD}(R,n)$ and $\overline{TD}(S,n)$. The first theorem provides the relation between these graphs.
\begin{thm}\label{tensorprod}
Let $R$ and $S$ be arbitrary rings. Then $\overline{TD}(R\times S,n)\simeq \overline{TD}(R,n) \otimes \overline{TD}(S,n)$.
\end{thm}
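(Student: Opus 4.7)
The approach I would take is to exhibit an explicit graph isomorphism and verify adjacency by unpacking the componentwise behavior of the dot product in $R\times S$.

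First I would define the natural bijection on vertex sets
\[
\phi: (R\times S)^n \longrightarrow R^n\times S^n,\qquad
\bigl((r_1,s_1),(r_2,s_2),\ldots,(r_n,s_n)\bigr)\longmapsto \bigl((r_1,\ldots,r_n),\,(s_1,\ldots,s_n)\bigr).
\]
This is clearly a bijection, so it remains to show it preserves (and reflects) adjacency; since both graphs are loop-decorated (per the definition of $\overline{TD}$), this will automatically handle the loop case as well.

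Next I would unpack adjacency in $\overline{TD}(R\times S,n)$. For vertices $\mathbf{x}=((r_i,s_i))_{i=1}^n$ and $\mathbf{y}=((r'_i,s'_i))_{i=1}^n$, the dot product computed in the product ring $R\times S$ is
\[
\mathbf{x}\cdot\mathbf{y} \;=\; \sum_{i=1}^{n}(r_i,s_i)(r'_i,s'_i) \;=\; \Bigl(\sum_{i=1}^n r_i r'_i,\; \sum_{i=1}^n s_i s'_i\Bigr),
\]
and this equals $(0,0)$ in $R\times S$ if and only if $\sum_i r_i r'_i=0$ in $R$ and simultaneously $\sum_i s_i s'_i=0$ in $S$. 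By the definition of $\overline{TD}$, these two conditions are precisely adjacency of $(r_1,\ldots,r_n)$ with $(r'_1,\ldots,r'_n)$ in $\overline{TD}(R,n)$ and adjacency of $(s_1,\ldots,s_n)$ with $(s'_1,\ldots,s'_n)$ in $\overline{TD}(S,n)$. By the definition of the tensor product of graphs, this is exactly the adjacency condition between $\phi(\mathbf{x})$ and $\phi(\mathbf{y})$ in $\overline{TD}(R,n)\otimes\overline{TD}(S,n)$.

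There is no real obstacle here; the statement is essentially a formal consequence of the fact that multiplication in a product ring is componentwise, so that the zero-dot-product relation factors coordinatewise. The only subtlety worth noting is that we are working with $\overline{TD}$ rather than $TD$, which is what allows us to include all of $R^n$ (and $(R\times S)^n$) as vertices and to retain loops; had we tried to state the theorem for $TD$ itself, the removal of the zero vector and of looped vertices would break the clean correspondence, because a vector in $(R\times S)^n$ can be nonzero yet project to zero in $R^n$ or in $S^n$. This is why the formulation is given in terms of $\overline{TD}$.
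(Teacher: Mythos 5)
Your proof is correct and follows essentially the same route as the paper: both identify a vertex $((r_1,s_1),\ldots,(r_n,s_n))$ with the pair $\bigl((r_1,\ldots,r_n),(s_1,\ldots,s_n)\bigr)$ and observe that the dot product in $R\times S$ vanishes exactly when both componentwise dot products vanish, which is the tensor-product adjacency condition. Your closing remark about why the statement is phrased for $\overline{TD}$ rather than $TD$ is a nice addition the paper leaves implicit.
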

\begin{proof}
Let $G=\overline{TD}(R\times S,n)$. The vertex $\mathbf{a}=((r_1,s_1),(r_2,s_2),\ldots,(r_n,s_n))$ in $G$ is adjacent to $\mathbf{b}=((r'_1,s'_1),(r'_2,s'_2),\ldots,(r'_n,s'_n))$ if and only if $\mathbf{a}.\mathbf{b}=(\sum_{i=1}^{n}r_i r'_{i},\sum_{i=1}^{n}s_i s'_{i})=(0,0)$. Equivalently, $\mathbf{r}=(r_1,r_2,\ldots,r_n)$ is adjacent to $\mathbf{r'}=(r'_1,r'_2,\ldots,r'_n)$ in $\overline{TD}(R,n)$ and  $\mathbf{s}=(s_1,s_2,\ldots,s_n)$ is adjacent to $\mathbf{s'}=(s'_1,s'_2,\ldots,s'_n)$ in $\overline{TD}(S,n)$, which proves the theorem.
\end{proof}
\begin{figure}
\begin{center}
\begin{tikzpicture}[baseline=(current bounding box.center),scale=1, shorten >=0.9pt, auto, node distance=2cm, ultra thick]
   \begin{scope}[every node/.style={circle,draw=blue,fill=blue!20!,inner sep = 2 pt, ,font=\sffamily}]  

\node (v1) at (3,3) {100};
    \node (v2) at (4,5.5) {010};
 \node (v3) at (5,3) {001};
 \node (v4) at (6,3) {110};
 \node (v5) at (7,5.5) {101};
         \node (v6) at (8,3) {011};    
             \node (v7) at (7,3.5) {111};    
              \node (v8) at (5.3,4.3) {000}; 
    \draw  (v1) edge  (v2);
    \draw  (v1) edge  (v3);
    \draw  (v2) edge  (v3);

\draw  (v1) edge [bend right] (v6);
\draw  (v2) edge  (v5);
\draw  (v3) edge  (v4);

\draw  (v7) edge  (v4);
\draw  (v7) edge  (v5);
\draw  (v7) edge  (v6);

\draw  (v8) edge  (v1);
\draw  (v8) edge  (v2);
\draw  (v8) edge  (v3);
\draw  (v8) edge  (v4);
\draw  (v8) edge  (v5);
\draw  (v8) edge  [bend left] (v6);
\draw  (v8) edge  (v7);

\draw (v4) to [in=70, out=90, loop] ();
\draw (v5) to [in=60, out=100, loop] ();
\draw (v6) to [in=60, out=100, loop] ();
\draw (v8) to [in=60, out=100, loop] ();

   \end{scope}
\end{tikzpicture}
\end{center}
\caption{$\overline{TD}(\mathbb{Z}_2,3)$.}\label{fig:sample2}
\end{figure}
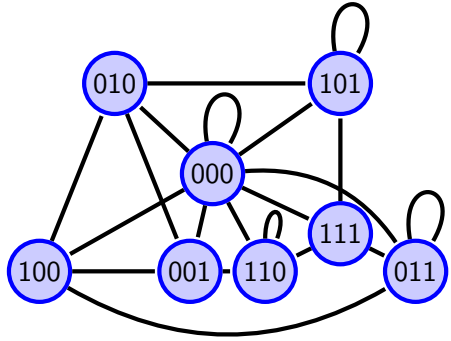
If $R$ is a finite commutative ring, then $R \simeq R_{1}\times \cdots\times
R_{t}$ where each $R_{i}$ is a finite commutative local ring with
maximal ideal $M_{i}$, by Theorem 8.7 of \cite {Atiyah}. Hence, by the aforementioned theorem, $\overline{TD}(R,n)\simeq \bigotimes_{i=1}^{t}\overline{TD}(R_{i},n)$.
\begin{rmk}
Let $R$ and $S$ be arbitrary rings. Theorem \ref{tensorprod} immediately tells us that the number of loops in $\overline{TD}(R\times S,n)$ is product of the number loops of $\overline{TD}(R,n)$ and $\overline{TD}(S,n)$.
\end{rmk}
\begin{rmk}
Let $O(R,n)$ be the number of non-trivial solutions of the equation $x_1^2+x_2^2+\cdots+x_n^2=0$. Then the number of loops in $\overline{TD}(R,n)$ equals to $O(R,n)+1$. By Exercise 19 in Chapter 8 of \cite{ireland}, we know that if $n$ is an odd number and $\mathbb{F}$ is the field of prime order $p$, then the number of loops in $\overline{TD}(\mathbb{F},n)$ is $p^{n-1}$.

Let $\mathbb{F}$ be a finite field of characteristic 2.  Since $x_{1}^{2}+x_{2}^{2}+\cdots+x_{n}^2=(x_1+\cdots+x_{n})^2$, it follows that the number of loops of $\overline{TD}(\mathbb{F},n)$ is $|\mathbb{F}|^{n-1}$.
\end{rmk}
\begin{thm}[Chevalley--Warning]
Let $\mathbb{F}$ be the field with $q=p^{\alpha}$ elements, where $p$ is a prime number. If $f(x_1,\ldots,x_n)\in \mathbb{F}[x_1,\ldots,x_n]$ and $\deg(f)<n$, then $| \{ (a_1,\ldots,a_n)\in \mathbb{F}^{n} \mid f(a_1,\ldots,a_n)=0  \}| \equiv 0~(\bmod~p)$.
\end{thm}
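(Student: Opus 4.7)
The plan is to count the zeros of $f$ in $\mathbb{F}^n$ by introducing the auxiliary polynomial $P(x_1,\ldots,x_n) := 1 - f(x_1,\ldots,x_n)^{q-1}$. Since $a^{q-1}=1$ for every $a \in \mathbb{F}^{\ast}$ and $0^{q-1}=0$, we have $P(a) = 1$ when $f(a)=0$ and $P(a) = 0$ otherwise. Hence, writing $N := |\{a \in \mathbb{F}^n : f(a) = 0\}|$, we obtain
\[
\sum_{a \in \mathbb{F}^n} P(a) = N \cdot 1_{\mathbb{F}}.
\]

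Next I would establish the key lemma: whenever $g \in \mathbb{F}[x_1,\ldots,x_n]$ has total degree strictly less than $n(q-1)$, the sum $\sum_{a \in \mathbb{F}^n} g(a)$ vanishes in $\mathbb{F}$. By linearity it suffices to treat a single monomial $x_1^{i_1}\cdots x_n^{i_n}$, in which case the sum factors as $\prod_{k=1}^{n}\bigl(\sum_{a \in \mathbb{F}} a^{i_k}\bigr)$. The degree constraint forces some $i_k < q-1$. If $i_k = 0$, the corresponding factor is $q \cdot 1_{\mathbb{F}} = 0$ because $\mathbb{F}$ has characteristic $p$ and $q = p^{\alpha}$. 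Otherwise, taking a generator $\zeta$ of $\mathbb{F}^{\ast}$, the factor equals $\sum_{j=0}^{q-2} \zeta^{j i_k}$, a geometric sum that evaluates to zero because $\zeta^{i_k} \neq 1$ while $\zeta^{(q-1)i_k} = 1$.

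To finish, observe that $\deg P = (q-1)\deg f < (q-1)n$ by the hypothesis $\deg f < n$, so the lemma applies to $g = P$ and yields $N \cdot 1_{\mathbb{F}} = 0$. Since $\mathbb{F}$ has characteristic $p$, this is equivalent to $N \equiv 0 \pmod{p}$, which is the claim. The only mildly delicate point is the evaluation of the monomial sum for $0 < i_k < q-1$, which relies on knowing that $\mathbb{F}^{\ast}$ is cyclic; everything else is formal bookkeeping with degrees and characteristic. It is worth noting that the hypothesis $\deg f < n$ is used only to secure the strict inequality $\deg P < n(q-1)$, so the argument is rather tight.
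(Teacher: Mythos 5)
Your argument is correct: it is the classical Chevalley--Warning proof via the indicator polynomial $1-f^{q-1}$ together with the lemma that $\sum_{a\in\mathbb{F}^n}g(a)=0$ whenever $\deg g<n(q-1)$, and all the steps (the monomial reduction, the vanishing of $\sum_{a\in\mathbb{F}}a^{i}$ for $0\le i<q-1$ using cyclicity of $\mathbb{F}^{\ast}$, and the degree bound $\deg P\le(q-1)\deg f<n(q-1)$) are carried out correctly. The paper states this theorem as a known classical result and offers no proof of its own, so there is nothing to compare against; your write-up supplies the standard argument that the paper implicitly relies on.
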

Let $n>2$. As a consequence of the Chevalley-Warning theorem, there exists a non-trivial loop in $\overline{TD}(\mathbb{F},n)$. Moreover, the number of loops is divisible by the characteristic of $\mathbb{F}$.

It is readily known that the equation $x^{2}+y^2=0$ has a non-trivial solution in $\mathbb{F}$ if and only if $|\mathbb{F}|\not\equiv 3~(\bmod~4)$. Therefore, we have:
\begin{equation*}O(\mathbb{F},2)=\left\{
\begin{array}{cc}
0 & |\mathbb{F}|\equiv 3~(\bmod~4) \\
2(|\mathbb{F}|-1) & |\mathbb{F}|\equiv 1~(\bmod~ 4)\\
|\mathbb{F}|-1 & |\mathbb{F}|~\textrm{is even}.\
\end{array} \right.
\end{equation*}
In the remainder of this section, we will restrict our attention to the degree of vertices, and isomorphism problem for $TD(R,n)$.
Let $\mathbf{a} \in R^n$ and $Z( \mathbf{a})=\{ \mathbf{b}; \mathbf{a}\cdot \mathbf{b}=0\}$. Hence, $N(\mathbf{a})=Z(\mathbf{a}) - \{0,\mathbf{a} \}$. Obviously, $Z(\mathbf{a})$ is a $R$-submodule of $R^n$, and $\deg \mathbf{a}=|N(\mathbf{a})|-1$ if $||\mathbf{a}||\neq 0$ and $\deg \mathbf{a}=|N(\mathbf{a})|-2$ otherwise.\\
In the following theorem, we will find the degree of the vertex $\mathbf{a}=(a_{1},\ldots,a_{n})\in R^n$, if at least one coordinate is invertible.
\begin{thm}
Let $R$ be a finite ring with nonzero identity. Let $\mathbf{a}=(a_{1},\ldots,a_{n})\in R^n$ such that there exists $1\leq i \leq n$ in such a way that $a_i$ is invertible. Then for the degree of $\mathbf{a}$ in the graph $TD(R,n)$, we have the following:
\begin{itemize}
\item[(a)]
If $|| \mathbf{a} || \neq 0$, then $\deg \mathbf{a}=|R|^{n-1}-1$.
\item[(b)]
If $|| \mathbf{a} ||=0$, then $\deg \mathbf{a}=|R|^{n-1}-2$.
\end{itemize}
\end{thm}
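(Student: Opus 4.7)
The plan is to compute $|Z(\mathbf{a})|$ directly by exploiting the invertible coordinate, and then translate this count into a degree via the relation between $Z(\mathbf{a})$ and $N(\mathbf{a})$ already laid out in the preceding paragraph.

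First I would introduce the $R$-linear map
\[
\phi : R^n \longrightarrow R, \qquad \phi(\mathbf{b}) = \mathbf{a} \cdot \mathbf{b} = \sum_{j=1}^{n} a_j b_j,
\]
so that $Z(\mathbf{a}) = \ker \phi$ is an $R$-submodule of $R^n$. The crucial observation is that $\phi$ is surjective: for any $r \in R$, the element $r a_i^{-1} e_i \in R^n$ satisfies $\phi(r a_i^{-1} e_i) = a_i \cdot r a_i^{-1} = r$, where we use that $a_i \in U(R)$. By the first isomorphism theorem, $R^n / Z(\mathbf{a}) \simeq R$ as $R$-modules, and comparing cardinalities (using finiteness of $R$) yields
\[
|Z(\mathbf{a})| = \frac{|R|^n}{|R|} = |R|^{n-1}.
\]

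Next I would split into the two cases based on whether $\mathbf{a}$ itself lies in $Z(\mathbf{a})$. Recall $\mathbf{a} \in Z(\mathbf{a})$ if and only if $\mathbf{a} \cdot \mathbf{a} = \|\mathbf{a}\| = 0$. In case (a), $\|\mathbf{a}\| \neq 0$, so $\mathbf{a} \notin Z(\mathbf{a})$; hence $N(\mathbf{a}) = Z(\mathbf{a}) \setminus \{\mathbf{0}\}$ and $\deg \mathbf{a} = |Z(\mathbf{a})| - 1 = |R|^{n-1} - 1$. In case (b), $\|\mathbf{a}\| = 0$, so both $\mathbf{0}$ and $\mathbf{a}$ lie in $Z(\mathbf{a})$ but must be excluded from the neighborhood (the former is not a vertex of $TD(R,n)$, and the latter is not distinct from $\mathbf{a}$); hence $\deg \mathbf{a} = |Z(\mathbf{a})| - 2 = |R|^{n-1} - 2$.

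There is no real obstacle here: the argument is entirely linear-algebraic and the only input beyond module theory is that surjectivity of $\phi$ is guaranteed as soon as one coordinate of $\mathbf{a}$ is a unit. If one wanted a completely bare-hands presentation avoiding the isomorphism theorem, one could equivalently observe that the fibers of $\phi$ are cosets of $\ker\phi$ in $R^n$ and there are exactly $|R|$ of them by surjectivity, giving $|\ker\phi| = |R|^{n-1}$ without invoking module quotients.
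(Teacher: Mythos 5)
Your proof is correct and takes essentially the same route as the paper: both arguments reduce to showing $|Z(\mathbf{a})|=|R|^{n-1}$ by exploiting the invertible coordinate (the paper parametrizes the solutions of $\mathbf{a}\cdot\mathbf{x}=0$ by freely choosing the other $n-1$ coordinates and solving uniquely for the one with the unit entry, which is exactly your ``fibers of $\phi$'' remark), and then both invoke the relation $\deg\mathbf{a}=|Z(\mathbf{a})|-1$ or $|Z(\mathbf{a})|-2$ from the preceding paragraph. Your version merely dresses the same count in the language of the first isomorphism theorem and spells out the two-case bookkeeping that the paper leaves implicit.
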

\begin{proof}
There is no loss of generality in assuming that $a_{n}$ is invertible. Thus, for any choice of $x_{1},x_2,\ldots,x_{n-1}$, there exists a unique $x_{n}\in R$ such that $\sum_{j=1}^{n-1} a_{j}x_{j}=-a_{n}x_n$. Then there exists exactly $|R|^{n-1}$ elements in $R^{n}$ in such a way that $\mathbf{a}\cdot \mathbf{x}=0$, which completes the proof.
\end{proof}
Consequently, in the case of finite field, we conclude the following corollary.
\begin{cor}\label{field}
Let $\mathbb{F}$ be a finite field with $q$ elements. Then the following hold:
\begin{itemize}
\item[(a)]
If $n>2$, then $TD(\mathbb{F},n)$ is a semi-regular graph with degrees $q^{n-1}-1$ and $q^{n-1}-2$.
\item[(b)]
If $n=2$ and $q\equiv 3~(\bmod~4)$, then $TD(\mathbb{F},n)$ is a regular graph of valency $q-1$.
\item[(c)]
If $n=2$ and $q\equiv 1~(\bmod~4)$, then $TD(\mathbb{F},n)$ is a semi-regular graph with degrees $q-1$ and $q-2$.
\item[(d)]
If $n=2$ and $char(\mathbb{F})=2$, then $TD(\mathbb{F},n)$ is a semi-regular graph with degrees $q-1$ and $q-2$.
\item[(e)]
If $n=1$, then $TD(\mathbb{F},n)$ is the empty graph with $q-1$ vertices.
\end{itemize}
\end{cor}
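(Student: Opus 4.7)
The plan is to derive all five cases as direct consequences of the preceding degree theorem, using the crucial fact that in a field every nonzero element is a unit, so \emph{every} nonzero vector $\mathbf{a}\in\mathbb{F}^n$ has at least one invertible coordinate. By that theorem, the degree of $\mathbf{a}$ is $q^{n-1}-1$ when $\|\mathbf{a}\|\neq 0$ and $q^{n-1}-2$ when $\|\mathbf{a}\|=0$. So the task reduces to determining, in each of the listed cases, which of these two values of $\|\mathbf{a}\|$ actually occur among the vertices.

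For part (a), I would invoke the Chevalley--Warning consequence already recorded just before the theorem: for $n>2$ the equation $x_1^2+\cdots+x_n^2=0$ admits a nontrivial solution over any finite field $\mathbb{F}$, so vertices with $\|\mathbf{a}\|=0$ exist; on the other hand, $e_1=(1,0,\ldots,0)$ satisfies $\|e_1\|=1\neq 0$, so vertices with $\|\mathbf{a}\|\neq 0$ also exist. Hence both degrees $q^{n-1}-1$ and $q^{n-1}-2$ are attained, giving the claimed semi-regularity. For parts (b)--(d) I would read off the answer directly from the formula for $O(\mathbb{F},2)$ displayed earlier: when $q\equiv 3\pmod 4$ we have $O(\mathbb{F},2)=0$, so $\|\mathbf{a}\|\neq 0$ for every nonzero $\mathbf{a}\in\mathbb{F}^2$, and every vertex has degree $q-1$ (regularity); in the remaining two subcases $O(\mathbb{F},2)>0$, so nontrivial isotropic vectors exist alongside $e_1$, and both degrees appear.

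For part (e), the statement is essentially a tautology: the vertex set is $\mathbb{F}\setminus\{0\}$, and two distinct nonzero field elements $x,y$ satisfy $x\cdot y=xy=0$ only if one of them is zero, which is impossible. Thus $TD(\mathbb{F},1)$ has no edges.

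No step is genuinely hard; the only subtlety is making sure that in each of (a)--(d) one actually exhibits vertices of each required $\|\mathbf{a}\|$-type, and this is covered either by Chevalley--Warning (case (a)) or by the explicit count of $O(\mathbb{F},2)$ (cases (b)--(d)), both of which are available from the preceding material. The proof is therefore a short case analysis rather than a new argument.
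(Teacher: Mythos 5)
Your proposal is correct and follows essentially the same route as the paper, which states the corollary as an immediate consequence of the preceding degree theorem (every nonzero coordinate of a nonzero vector over a field is invertible) together with the earlier facts on $O(\mathbb{F},n)$: Chevalley--Warning for $n>2$ and the displayed formula for $O(\mathbb{F},2)$ when $n=2$. Your explicit verification that both degree values are actually attained in the semi-regular cases is exactly the detail the paper leaves implicit.
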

In \cite{Badawi} it was shown that if $R$ is an integral domain, then $TD(R,2)$ is disconnected. In the next theorem we find the structure of $TD(\mathbb{F},2)$.
\begin{thm}\label{n2}
Let $\mathbb{F}$ be a finite field. Then $TD(\mathbb{F},2)$ is disconnected, and
\begin{itemize}
\item[(a)]
If $O(\mathbb{F},2)=0$, then the number of connected component is $\dfrac{|\mathbb{F}|+1}{2}$. Moreover, $TD(\mathbb{F},2)$ is disjoint union of  $\dfrac{|\mathbb{F}|+1}{2}$ complete bipartite graphs $K_{|\mathbb{F}|-1,|\mathbb{F}|-1}$.
\item[(b)]
The graph $TD(\mathbb{F},2)$ is disjoint union of $\dfrac{O(2,\mathbb{F})}{|\mathbb{F}|-1}$ complete graphs of size $|\mathbb{F}|-1$ and \newline $\dfrac{|\mathbb{F}|^2-1-O(2,\mathbb{F})}{2(|\mathbb{F}|-1)}$ complete bipartite graphs $K_{|\mathbb{F}|-1,|\mathbb{F}|-1}$.
\end{itemize}
\end{thm}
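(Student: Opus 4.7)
The plan is to reduce everything to the orthogonal--complement relation on the lines (one--dimensional subspaces) of $\mathbb{F}^{2}$. For any nonzero $\mathbf{a}\in\mathbb{F}^{2}$ the set $Z(\mathbf{a})=\{\mathbf{x}:\mathbf{a}\cdot\mathbf{x}=0\}$ is a one--dimensional $\mathbb{F}$--subspace (because $\mathbf{a}$ defines a surjective linear functional $\mathbb{F}^{2}\to\mathbb{F}$), and $Z(\lambda\mathbf{a})=Z(\mathbf{a})$ for $\lambda\in\mathbb{F}^{\ast}$. Hence $\mathbf{a}\mapsto Z(\mathbf{a})$ descends to a well--defined map on the $|\mathbb{F}|+1$ lines of $\mathbb{F}^{2}$. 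I will split into the isotropic case $\|\mathbf{a}\|=0$ and the non--isotropic case $\|\mathbf{a}\|\neq 0$, noting that $\|\lambda\mathbf{a}\|=\lambda^{2}\|\mathbf{a}\|$, so being isotropic is a property of the line through $\mathbf{a}$.

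For an isotropic line $L$, any two nonzero vectors $\mathbf{a},\mathbf{a}'\in L$ satisfy $\mathbf{a}\cdot\mathbf{a}'=0$, so $L\setminus\{0\}$ is a clique in $TD(\mathbb{F},2)$. Moreover, $\mathbf{a}\in Z(\mathbf{a})$ and $Z(\mathbf{a})$ is one--dimensional, so $Z(\mathbf{a})=L$. Thus the only neighbours of $\mathbf{a}$ lie in $L\setminus\{0,\mathbf{a}\}$, and the line forms a full connected component isomorphic to $K_{|\mathbb{F}|-1}$.

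For a non--isotropic $\mathbf{a}$, the main claim is that $Z(\mathbf{a})$ contains no nonzero isotropic vector. If some isotropic $\mathbf{b}\in Z(\mathbf{a})$ existed, then $\mathbf{a}\notin Z(\mathbf{a})$ while $\mathbf{b}\in Z(\mathbf{a})$ forces $\mathbf{a},\mathbf{b}$ to be linearly independent and hence a basis of $\mathbb{F}^{2}$; but $\mathbf{a}\cdot\mathbf{b}=0$ says $\mathbf{a}\in Z(\mathbf{b})$, which equals the line through $\mathbf{b}$ (as $\mathbf{b}$ is isotropic), contradicting independence. Hence $Z(\mathbf{a})$ is a non--isotropic line $L'$, and by symmetry $Z(L')=L$, so non--isotropic lines pair up under orthogonality. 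Every vector of $L\setminus\{0\}$ is adjacent to every vector of $L'\setminus\{0\}$, while each side is itself independent (since its vectors are non--isotropic scalar multiples of one another). This component is therefore $K_{|\mathbb{F}|-1,\,|\mathbb{F}|-1}$.

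It remains to count. There are $O(\mathbb{F},2)/(|\mathbb{F}|-1)$ isotropic lines, each contributing a $K_{|\mathbb{F}|-1}$, and $(|\mathbb{F}|^{2}-1-O(\mathbb{F},2))/(|\mathbb{F}|-1)$ non--isotropic lines, which pair off into $(|\mathbb{F}|^{2}-1-O(\mathbb{F},2))/(2(|\mathbb{F}|-1))$ bipartite components, proving (b). Specialising to $O(\mathbb{F},2)=0$ yields $(|\mathbb{F}|+1)/2$ components, all of the form $K_{|\mathbb{F}|-1,|\mathbb{F}|-1}$, proving (a). The only delicate step is the linear--independence argument ruling out an isotropic vector in $Z(\mathbf{a})$ when $\mathbf{a}$ is non--isotropic; once that is in hand, everything else is bookkeeping.
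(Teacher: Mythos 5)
Your proof is correct and follows essentially the same route as the paper: the paper's sets $A_1=\{(ra,rb)\}$ and $A_2=\{(-rb,ra)\}$ are exactly your line $L$ and its orthogonal line $L'$, with isotropic lines giving the $K_{|\mathbb{F}|-1}$ components and orthogonal pairs of non-isotropic lines giving the $K_{|\mathbb{F}|-1,|\mathbb{F}|-1}$ components. You justify that these induced subgraphs are full components via the one-dimensionality of $Z(\mathbf{a})$ rather than by citing the degree formula of Corollary \ref{field} as the paper does, but this is the same fact, and your explicit line count matches the bookkeeping the paper leaves implicit.
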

\begin{proof}
Let $(a,b)$ be a vertex in $TD(\mathbb{F},2)$. We have two cases:
\begin{itemize}
\item[(1)]
If $a^{2}+b^2\neq 0$. Let $A_{1}=\{(ra,rb)\mid r\in \mathbb{F}^{\ast} \}$ and $A_{2}=\{(-rb,ra)\mid r\in \mathbb{F}^{\ast}\}$. Obviously, the graph induced by $A_1 \cup A_2$ is isomorphic to the complete bipartite graph $K_{|\mathbb{F}|-1,|\mathbb{F}|-1}$. Then it is a connected component by Corollary \ref{field}.
\item[(2)]
If $a^{2}+b^2=0$. Then the graph induced by $W=\{(ra,rb)\mid r\in \mathbb{F}^{\ast}\}$ is a clique of size $|\mathbb{F}|-1$. Thus, it is a connected component by Corollary \ref{field}.
\end{itemize}
\end{proof}
The next theorem shows that if $\mathbb{F}$ and $\mathbb{E}$ are different fields or $m\neq n$, then the graph $TD(\mathbb{F},n)$ is not isomorphic to the graph $TD(\mathbb{E},m)$.
\begin{thm}\label{meydan}
Let $\mathbb{F}$ and $\mathbb{E}$ be finite fields, and let $m,n$ be integers. If $TD(\mathbb{F},n)\simeq TD(\mathbb{E},m)$, then $m=n$ and $\mathbb{F}\simeq\mathbb{E}$.
\end{thm}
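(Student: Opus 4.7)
The plan is to use the two combinatorial invariants already available: the vertex count, and the degree classification from Corollary~\ref{field}. Set $q = |\mathbb{F}|$ and $r = |\mathbb{E}|$. An isomorphism immediately gives
\begin{equation*}
q^{n} - 1 = r^{m} - 1, \qquad \text{hence} \qquad q^{n} = r^{m}.
\end{equation*}
Since any two finite fields of equal cardinality are isomorphic, it is enough to show $q = r$ and $n = m$.

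I would split on $n$. When $n = 1$ the graph is edgeless, so $TD(\mathbb{E},m)$ must also be edgeless; but $e_{1}\cdot e_{2} = 0$ shows $TD(\mathbb{E}, m)$ has an edge as soon as $m \geq 2$, forcing $m = 1$ and then $q = r$ from the vertex count. When $n \geq 3$ the graph has exactly the two distinct degrees $q^{n-1}-1$ and $q^{n-1}-2$, both actually attained: $e_1$ has norm $1$, while by Chevalley--Warning the equation $x_1^{2}+\cdots+x_n^{2}=0$ admits a nontrivial solution, giving a vertex of norm zero with an invertible coordinate. So $TD(\mathbb{E},m)$ must also exhibit precisely these two degree values. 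This rules out $m = 1$ (no edges). It also rules out $m = 2$: in the regular sub-case $r \equiv 3 \pmod 4$ the target is regular, and in the semi-regular sub-case matching degrees forces $r = q^{n-1}$, which combined with $q^{n} = r^{m} = r^{2}$ collapses to $n = 2$. Hence $m \geq 3$ and $r^{m-1} = q^{n-1}$; dividing this relation into $q^{n} = r^{m}$ gives $q = r$, and consequently $n = m$.

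The case $n = 2$ is handled in the same spirit. The maximum degree is $q - 1$, and Theorem~\ref{n2} together with Corollary~\ref{field} distinguishes the regular situation $q \equiv 3 \pmod 4$ from the semi-regular one. The case $m = 1$ is excluded as before. The case $m \geq 3$ would give the maximum degree $r^{m-1} - 1$, so $r^{m-1} = q$; substituting into $r^{m} = q^{n} = q^{2}$ forces $m = 2$, a contradiction. Therefore $m = 2$, the regular versus semi-regular dichotomy is preserved by the isomorphism, and the common maximum degree $q - 1 = r - 1$ yields $q = r$.

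The argument is essentially a case analysis driven by the identity $q^{n} = r^{m}$ and the top-degree equation $q^{n-1} = r^{m-1}$; the one step that needs a little input is confirming, via Chevalley--Warning, that for $n \geq 3$ both degrees actually occur, since otherwise the graph could be regular and collapse the case distinction. I expect that to be the main (mild) obstacle; the rest is bookkeeping across the sub-cases of Corollary~\ref{field} and Theorem~\ref{n2}.
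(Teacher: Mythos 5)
Your proof is correct and rests on exactly the same two invariants the paper uses: the vertex count $q^{n}=r^{m}$ and the maximum degree. The paper's version is shorter only because it observes that the maximum degree equals $q^{n-1}-1$ uniformly in every case of Corollary~\ref{field} (it is attained by $e_{1}$, which has norm $1\neq 0$, and this covers $n=1$ and $n=2$ as well), so one gets $q^{n-1}=r^{m-1}$ directly and divides into $q^{n}=r^{m}$ to conclude $q=r$ and $n=m$ --- your case analysis and the appeal to Chevalley--Warning are sound but unnecessary.
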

\begin{proof}
Let $|\mathbb{F}|=q$ and $|\mathbb{E}|=r$. The number of vertices of $TD(\mathbb{F},n)$ and $TD(\mathbb{E},m)$ are $q^{n}-1$ and $r^{m}-1$, respectively. Therefore,
\begin{equation}\label{1}
q^{n}-1=r^{m}-1.
\end{equation}
The graphs $TD(\mathbb{F},n)$ and $TD(\mathbb{E},m)$ are regular or semi-regular graphs. The maximum degree of $TD(\mathbb{F},n)$ and $TD(\mathbb{E},m)$ are $q^{n-1}-1$ and $r^{m-1}-1$, respectively. Hence,
\begin{equation}\label{2}
q^{n-1}-1=r^{m-1}-1.
\end{equation}
Combining equations (\ref{1}) and (\ref{2}), we can see that $n=m$ and $q=r$.
\end{proof}
The next theorem deals with the degree of vertices for reduced rings.
\begin{thm}
Let $R=\mathbb{F}_{1}\times \cdots \times \mathbb{F}_{t}$, where $\mathbb{F}_{i}$ is a field for each $i=1,\ldots,t$. Then the degree of $\mathbf{a}=\big(  (a_{11},\ldots,a_{1t}),\ldots,(a_{n1},\ldots,a_{nt})  \big)$ is
\begin{equation*}\left\{
\begin{array}{cc}
\dfrac{|R|^{n}}{\prod_{i=1}^{t}|\mathbb{F}_{i}|^{\tau_i}}-1& \textrm{\rm{if}}~||\mathbf{a}||\neq 0\\
\\
\dfrac{|R|^{n}}{\prod_{i=1}^{t}|\mathbb{F}_{i}|^{\tau_i}}-2& \textrm{\rm{if}}~||\mathbf{a}||= 0,\\
\end{array} \right.
\end{equation*}
where,
\begin{equation*}\tau_i=\left\{
\begin{array}{cc}
0 & \textrm{if}~(a_{1i}, a_{2i},\ldots ,a_{ni})= \mathbf{0}\\
1 & \rm{otherwise}.\
\end{array} \right.
\end{equation*}
In particular, the minimum degree of $TD(R,n)$ is either $|R|^{n-1}-1$ or $|R|^{n-1}-2$.
\end{thm}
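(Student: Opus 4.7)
My plan is to exploit the product decomposition $R^n \cong \prod_{i=1}^t \mathbb{F}_i^n$ and reduce the computation to the field case. Under this identification the vertex $\mathbf{a}$ corresponds to the tuple $(\mathbf{a}^{(1)},\ldots,\mathbf{a}^{(t)})$ where $\mathbf{a}^{(i)}=(a_{1i},\ldots,a_{ni})\in\mathbb{F}_i^n$. Because multiplication in $R$ is coordinatewise, the dot product is coordinatewise too: $\mathbf{a}\cdot\mathbf{b}=0$ in $R$ if and only if $\mathbf{a}^{(i)}\cdot\mathbf{b}^{(i)}=0$ in $\mathbb{F}_i$ for every $i$. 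Consequently $Z(\mathbf{a})$ splits as the product $\prod_i Z(\mathbf{a}^{(i)})$ of the corresponding submodules of the $\mathbb{F}_i^n$, so $|Z(\mathbf{a})|=\prod_i |Z(\mathbf{a}^{(i)})|$.

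Next I would compute each factor. If $\mathbf{a}^{(i)}=\mathbf{0}$ then $Z(\mathbf{a}^{(i)})=\mathbb{F}_i^n$ and has cardinality $|\mathbb{F}_i|^n$. Otherwise some coordinate of $\mathbf{a}^{(i)}$ is nonzero, hence invertible in the field $\mathbb{F}_i$, and the argument used in the preceding degree theorem (solving one coordinate in terms of the other $n-1$) shows that $|Z(\mathbf{a}^{(i)})|=|\mathbb{F}_i|^{n-1}$. In both cases $|Z(\mathbf{a}^{(i)})|=|\mathbb{F}_i|^{n-\tau_i}$, and multiplying gives
\[
|Z(\mathbf{a})|=\prod_{i=1}^t |\mathbb{F}_i|^{\,n-\tau_i}=\frac{|R|^n}{\prod_{i=1}^t |\mathbb{F}_i|^{\tau_i}}.
\]
The stated formula for $\deg\mathbf{a}$ now follows from the observation recorded before the theorem: one always discards the zero vector from $Z(\mathbf{a})$, and one additionally discards $\mathbf{a}$ itself precisely when $\mathbf{a}\in Z(\mathbf{a})$, i.e.\ when $||\mathbf{a}||=0$.

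For the ``in particular'' clause, minimizing $\deg\mathbf{a}$ amounts to maximizing the denominator $\prod_i |\mathbb{F}_i|^{\tau_i}$. Since each $\tau_i\in\{0,1\}$, this product is maximized (and equals $|R|$) precisely when $\tau_i=1$ for every $i$, that is when every ``column'' $\mathbf{a}^{(i)}$ is nonzero; this is realized, for example, by the vertex all of whose coordinates equal $1\in R$. The minimum degree is therefore $|R|^{n-1}-1$ or $|R|^{n-1}-2$ according to whether $||\mathbf{a}||$ is nonzero or zero for such an extremal vertex.

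There is no real obstacle here: once the identification $R^n\cong\prod\mathbb{F}_i^n$ is set up, the proof is pure bookkeeping and a single appeal to the field-case degree computation proved earlier. The only minor subtlety is keeping the $\pm 1$ versus $\pm 2$ correction straight, which is handled uniformly by the general formula $\deg\mathbf{a}=|Z(\mathbf{a})|-1-[\,||\mathbf{a}||=0\,]$.
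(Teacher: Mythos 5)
Your proof is correct and follows essentially the same route as the paper: the adjacency condition splits into $t$ independent linear equations, one over each $\mathbb{F}_i$, each contributing a factor $|\mathbb{F}_i|^{n-\tau_i}$ to $|Z(\mathbf{a})|$, and the $-1$ versus $-2$ correction comes from removing $\mathbf{0}$ and, when $\|\mathbf{a}\|=0$, the vertex $\mathbf{a}$ itself. You are in fact somewhat more explicit than the paper about the final correction and the ``in particular'' clause, which the paper leaves to the reader.
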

\begin{proof}
Let $\mathbf{a}=\big(  (a_{11},\ldots,a_{1t}),\ldots,(a_{n1},\ldots,a_{nt})  \big) \in R^{n}$. Then $\mathbf{b}=\big(  (b_{11},\ldots,b_{1t}),\ldots,(b_{n1},\ldots,b_{nt})  \big)$ is adjacent to $\mathbf{a}$ if the following system of equations is satisfied:
\begin{equation*}\left\{
\begin{array}{c}
a_{11}b_{11}+a_{21}b_{21}+\cdots +a_{n1}b_{n1}=0 \\
a_{12}b_{12}+a_{22}b_{22}+\cdots +a_{n2}b_{n2}=0 \\
\vdots\\
a_{1t}b_{1t}+a_{2t}b_{2t}+\cdots +a_{nt}b_{nt}=0.  \
\end{array} \right.
\end{equation*}
Equations are independent, so the number of solutions is $\dfrac{\prod_{i=1}^{t}|\mathbb{F}_{i}|^{n}}{\prod_{i=1}^{t}|\mathbb{F}_{i}|^{\tau_i}}$, which completes the proof.
\end{proof}
\begin{rmk}
Let $\mathbf{g}= \big( (a_1,b_1),(a_2,b_2),\ldots,(a_n,b_n)\big) \in (R\times S)^{n}$. Let $\mathbf{a}=(a_1,\ldots,a_n)\in R^n$ and $\mathbf{b}=(b_1,\ldots,b_n)\in S^n$. Let $\mathbf{a}\neq \mathbf{0}$ and $\mathbf{b}\neq \mathbf{0}$. Then the degree of $\mathbf{g}$ in $TD(R\times S,n)$ is:
\begin{equation*}\left\{
\begin{array}{cc}
(1+\deg_{R} \mathbf{a})(1+\deg_{S} \mathbf{b})-1& \textrm{if}~  ||\mathbf{a}||\neq 0~\textrm{and} ~||\mathbf{b}||\neq 0\\
(2+\deg_{R} \mathbf{a})(1+\deg_{S} \mathbf{b})-1& \textrm{if}~ ||\mathbf{a}||= 0~\textrm{and} ~||\mathbf{b}||\neq 0\\
(1+\deg_{R} \mathbf{a})(2+\deg_{S} \mathbf{b})-1& \textrm{if}~  ||\mathbf{a}||\neq 0~\textrm{and} ~||\mathbf{b}||= 0\\
(2+\deg_{R} \mathbf{a})(2+\deg_{S} \mathbf{b})-2& \textrm{if}~  ||\mathbf{a}|| = 0~\textrm{and} ~||\mathbf{b}||= 0,\\
\end{array} \right.
\end{equation*}
where $\deg_{R} \mathbf{a}$ and $\deg_{S} \mathbf{b}$ denote the degree of $\mathbf{a}$ and $\mathbf{b}$ in $TD(R,n)$ and $TD(S,n)$, respectively.\\
If $\mathbf{a}= \mathbf{0}$ and $\mathbf{b}\neq \mathbf{0}$, then the degree of $\mathbf{g}$ in $TD(R\times S,n)$ is:
\begin{equation*}\left\{
\begin{array}{cc}
|R|^{n}(1+\deg_{S}\mathbf{b})-1& \textrm{if}~||\mathbf{b}||\neq 0\\
|R|^n(2+\deg_{S} \mathbf{b})-2& \textrm{if}~||\mathbf{b}||= 0.\\
\end{array} \right.
\end{equation*}
If $\mathbf{a}\neq  \mathbf{0}$ and $\mathbf{b}= \mathbf{0}$, then the degree of $\mathbf{g}$ in $TD(R\times S,n)$ is:
\begin{equation*}\left\{
\begin{array}{cc}
|S|^{n}(1+\deg_{R} \mathbf{a})-1& \textrm{if}~||\mathbf{a}||\neq 0\\
|S|^n(2+\deg_{R} \mathbf{a})-2& \textrm{if}~||\mathbf{a}||= 0.\\
\end{array} \right.
\end{equation*}
\end{rmk}
\begin{thm}
Let $R=\mathbb{F}_{1}\times \cdots \times \mathbb{F}_{t}$, $S=\mathbb{E}_{1}\times \cdots \times \mathbb{E}_{s}$, where $\mathbb{F}_{i}$ and $\mathbb{E}_{j}$ are fields for each $i=1,\ldots,t$ and $j=1,\ldots,s$. Let $m,n$ be integers. If $TD(R,n)\simeq TD(S,m)$. Then $m=n$ and $|R|=|S|$.
\end{thm}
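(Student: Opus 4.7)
The plan is to mirror the proof of Theorem~\ref{meydan}, reading off numerical invariants of the graph via the degree formula of the preceding theorem. First, equality of vertex counts gives $|R|^n - 1 = |S|^m - 1$, so $N := |R|^n = |S|^m$.

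The next step is to compute the maximum and minimum degrees. By the preceding theorem, $\mathbf{a} \in R^n$ has degree $N/\prod_i |\mathbb{F}_i|^{\tau_i(\mathbf{a})} - \epsilon$ with $\epsilon \in \{1,2\}$. Choosing $\tau$ supported on a single index $i_0$ realising $q_R := \min_i |\mathbb{F}_i|$, and arranging $||\mathbf{a}|| \neq 0$, yields the maximum degree $\Delta(TD(R,n)) = N/q_R - 1$; the analogous computation on the $S$-side gives $q_R = q_S$. Dually, the minimum degree is attained on $\mathbf{a}$ with $\tau \equiv 1$, and equals $|R|^{n-1} - \epsilon_R$ (resp.\ $|S|^{m-1} - \epsilon_S$) for some $\epsilon_R, \epsilon_S \in \{1,2\}$. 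Equating forces $|R|^{n-1}$ and $|S|^{m-1}$ to differ by at most $1$. Once one establishes $|R|^{n-1} = |S|^{m-1}$, dividing this into $|R|^n = |S|^m$ gives $|R| = |S|$ and hence $n = m$.

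The main obstacle is the $\pm 1$ ambiguity in the minimum-degree comparison, caused by the possibility that $\epsilon_R \neq \epsilon_S$. For $n,m \geq 3$ this ambiguity disappears: by Chevalley--Warning every $\mathbb{F}_i$ admits a non-trivial solution of $x_1^2 + \cdots + x_n^2 = 0$, so one can realise a $\tau$-all-one vertex with $||\mathbf{a}|| = 0$ on both sides and conclude $\epsilon_R = \epsilon_S = 2$. For the residual small cases I would translate $|R|^{n-1} - |S|^{m-1} = \pm 1$ together with $|R|^n = |S|^m$ into the Diophantine identity $(|R| - |S|)\,|S|^{m-1} = \pm\,|R|$, and show that the very short list of integer solutions is incompatible with the already-known constraint $q_R = q_S$ and with the requirement that both $R$ and $S$ be products of fields (each of size at least $q_R$).
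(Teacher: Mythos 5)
Your main line is the same as the paper's: count vertices to get $|R|^n=|S|^m$, compare minimum degrees via the preceding degree formula, and observe that once $|R|^{n-1}=|S|^{m-1}$ is known the conclusion follows by division exactly as in Theorem~\ref{meydan}. Where you genuinely diverge is in resolving the off-by-one ambiguity $\epsilon_R\neq\epsilon_S$. The paper keeps both cases and turns $|R|^{n-1}-1=|S|^{m-1}-2$ into the identity $|S|^{(n-1)(m-1)-1}(|R|-|S|)^{n}=1$, which forces $(n-1)(m-1)\leq 1$ and is then dispatched case by case. Your use of Chevalley--Warning to manufacture, for $n,m\geq 3$, a vertex with every $\tau_i=1$ and $\|\mathbf{a}\|=0$ in each field factor (hence $\epsilon_R=\epsilon_S=2$) is correct and arguably cleaner on that range; the auxiliary invariant $q_R=q_S$ extracted from the maximum degree is also valid, though it ends up doing no work.

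The gap is in the deferred small-case step. The identity $(|R|-|S|)\,|S|^{m-1}=|R|$ together with $|R|^n=|S|^m$ does eliminate everything with $n,m\geq 2$ (since $|R|\geq|S|^{m-1}>|R|^{n-1}$ forces $n=1$), but at $n=1$ it has the solution $|S|=2$, $m=2$, $|R|=4$, and this solution is \emph{not} incompatible with $q_R=q_S$ or with $R,S$ being products of fields: take $R=\mathbb{Z}_2\times\mathbb{Z}_2$ and $S=\mathbb{Z}_2$. Then $TD(\mathbb{Z}_2\times\mathbb{Z}_2,1)$ and $TD(\mathbb{Z}_2,2)$ are each a single edge plus an isolated vertex, so they are isomorphic and no invariant can separate them; the statement itself fails unless $n,m\geq 2$ is assumed. (The paper's proof has the same hole: it dismisses $n=1$ by asserting that $TD(S,m)$ has no isolated vertex, which is false for $S=\mathbb{Z}_2$, $m=2$, where $(1,1)$ is isolated.) So your plan closes for $n,m\geq 2$ once the Diophantine check is actually carried out, but the promised incompatibility argument for the residual $n=1$ case cannot succeed as stated.
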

\begin{proof}
Since $TD(R,n)\simeq TD(S,m)$, we have $$|R|^{n}=|S|^{m}.$$
The minimum degree of $TD(R,n)$ is either $|R|^{n-1}-1$ or $|R|^{n-1}-2$. Also, the minimum degree of $TD(S,m)$ is either $|S|^{m-1}-1$ or $|S|^{m-1}-2$. Therefore, we can reduce to two cases:
\begin{itemize}
\item[(i)] If $$|R|^{n-1}-1=|S|^{m-1}-2,$$
then we get
$$|S|^{m-1}(|R|-|S|)=|R|.$$
Thus,
$$|S|^{n(m-1)}(|R|-|S|)^{n}=|S|^{m}.$$
Hence,
$$|S|^{(n-1)(m-1)-1}(|R|-|S|)^{n}=1.$$
It means that $m=2,n=2$ or either $m$ or $n$ is 1. If $m=n=2$, then $|R|^{2}=|S|^{2}$ and $|R|=|S|+1$, which cannot be hold. If $n=1$, then the graph $TD(R,n)$ has an isolated vertex but $TD(S,m)$ has no isolated vertex.
\item[(ii)] If $$|R|^{n-1}-1=|S|^{m-1}-1.$$
Similar to the proof of Theorem \ref{meydan}, we can get $m=n$ and $|R|=|S|$.
\end{itemize}
\end{proof}
The next theorem shows that for a field $\mathbb{F}$, the graph $TD(\mathbb{F},n)$, can be determined uniquely among all rings.
\begin{thm}\label{fieldDS}
Let $\mathbb{F}$ be a finite field and $R$ be a ring. Let $m,n$ be integers. If $TD(\mathbb{F},n)\simeq TD(R,m)$. Then $m=n$ and $R\simeq \mathbb{F}$.
\end{thm}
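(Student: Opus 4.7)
The plan is to use the almost-regular structure of $TD(\mathbb{F},n)$ from Corollary \ref{field} to pin down the sizes of the principal ideals of $R$, and then to deduce that $R$ is a field isomorphic to $\mathbb{F}$. Write $q=|\mathbb{F}|$ and assume $n\geq 2$; the case $n=1$ reduces by inspection to $R$ being a finite commutative ring of order $q$ whose dot-product graph is edgeless, which can be handled separately. Comparing vertex sets gives $q^n=|R|^m$.

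The key observation is that for any $\mathbf{a}\in R^m$ the evaluation map $\phi_{\mathbf{a}}\colon R^m\to R$, $\mathbf{x}\mapsto \mathbf{a}\cdot\mathbf{x}$, is $R$-linear with image $I_{\mathbf{a}}:=(a_1,\ldots,a_m)$ and kernel $Z(\mathbf{a})$; therefore $|Z(\mathbf{a})|=|R|^m/|I_{\mathbf{a}}|$, and because $\deg\mathbf{a}$ equals $|Z(\mathbf{a})|-1$ or $|Z(\mathbf{a})|-2$, the hypothesis $TD(\mathbb{F},n)\simeq TD(R,m)$ combined with Corollary \ref{field} forces
\begin{equation*}
\frac{q^n}{|I_{\mathbf{a}}|}\in\bigl\{q^{n-1}-1,\ q^{n-1},\ q^{n-1}+1\bigr\}\quad\text{for every nonzero }\mathbf{a}\in R^m.
\end{equation*}

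The main obstacle is now purely arithmetic: I must show that only the middle value is actually attainable. Since $|I_{\mathbf{a}}|$ divides $|R|^m=q^n$, each candidate above must divide $q^n$. But $\gcd(q^{n-1}\pm 1,\,q)=1$, so $q^{n-1}\pm 1$ is coprime to $q^n$; this forces $q^{n-1}\pm 1=1$, which is impossible unless $q=n=2$. The tiny remaining case $q=n=2$ is dispatched by hand, since $TD(\mathbb{Z}_2,2)$ has only $3$ vertices. Hence $|I_{\mathbf{a}}|=q$ for every nonzero $\mathbf{a}\in R^m$.

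Specializing to $\mathbf{a}=(1,0,\ldots,0)$ gives $|R|=q$, and then $q^n=|R|^m=q^m$ yields $n=m$. Finally, for every nonzero $r\in R$ the principal ideal $(r)$ has order $q=|R|$, so $(r)=R$ and $r$ is a unit; thus $R$ is a finite field of order $q$, and therefore $R\simeq\mathbb{F}$. The only non-routine step in this outline is the divisibility elimination of the perturbations $q^{n-1}\pm 1$; everything else is bookkeeping.
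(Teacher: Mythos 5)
Your argument is correct in the main regime and takes a genuinely different route from the paper. The paper argues by contradiction via a degree gap: if $R$ is not a field, pick a zero divisor $a$ with $ab=0$, observe that $(a,a,\ldots,a)$ has degree at least $2|R|^{m-1}-2$ while $(1,1,\ldots,1)$ has degree at most $|R|^{m-1}-1$, so the degree spread exceeds the spread of at most $1$ that Corollary \ref{field} allows in $TD(\mathbb{F},n)$; it then still needs Theorem \ref{meydan} to conclude $m=n$ and identify the field. Your proof instead exploits the exact identity $|Z(\mathbf{a})|=|R|^m/|I_{\mathbf{a}}|$ from the first isomorphism theorem and a divisibility argument: since $q^n/|I_{\mathbf{a}}|$ must divide the prime power $q^n$ while $q^{n-1}\pm 1$ is coprime to $q$, the only admissible value is $q^{n-1}$, whence $|I_{\mathbf{a}}|=q$ for every nonzero $\mathbf{a}$; taking $\mathbf{a}=e_1$ gives $|R|=q$ and $m=n$ at once, and taking $\mathbf{a}=(r,0,\ldots,0)$ shows every nonzero $r$ is a unit. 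This is cleaner and more self-contained (it subsumes Theorem \ref{meydan} rather than citing it), at the cost of the small arithmetic lemma about divisors of $q^n$. One caveat: the exceptional cases you promise to ``dispatch by hand'' cannot actually be dispatched, because the statement itself fails there --- e.g.\ $TD(\mathbb{Z}_2,2)\simeq TD(\mathbb{Z}_2\times\mathbb{Z}_2,1)$ (both are an edge plus an isolated vertex) and $TD(\mathbb{F}_4,1)\simeq TD(\mathbb{Z}_4,1)$ (both edgeless on three vertices); this is a defect of the theorem as stated (the paper's own proof tacitly assumes $m,n\geq 2$), so your proof should simply record $n\geq 2$, $m\geq 2$ and $(q,n)\neq(2,2)$ as standing hypotheses rather than claim those cases go through.
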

\begin{proof}
First we prove that $R$ must be a field. On the contrary, assume that $R$ is not a field. Let $d_1$ and $d_2$ be two vertex degree of the graph $TD(\mathbb{F},n)$. Then by Corollary \ref{field}, we have 
\begin{equation}\label{field1}
|d_1-d_2|\in \{ 0,1 \}.
\end{equation}
Let $R$ be a ring which is not a field. Hence there exists a non-zero zero divisor in $R$, say $a$. Let $b$ be a non-zero element of $R$ such that $ab=0$. Obviously, degree of $\mathbf{1}=(1,1,\ldots,1)$ is $|R|^{m-1}-2$ if $m$ is divisible by $char(R)$, and $|R|^{m-1}-1$, otherwise. Let $\mathbf{a}=(a,a,\ldots,a)$. Thus, $\mathbf{a}$ is adjacent to $(b_1,\ldots,b_{n})$ whenever either $b_{1}+\cdots+b_{n}=0$ or $b_{1}+\cdots+b_{n}=b$. Then $\deg ( \mathbf{a} )\geq 2 |R|^{m-1}-2$. Obviously, $2 |R|^{m-1}-2>|R|^{m-1}-2$, which contradicts Formula (\ref{field1}). The rest of the proof is clear by Theorem \ref{meydan}.
\end{proof}
\begin{rmk}
By Corollary \ref{field} and Theorem \ref{fieldDS}, we can classify all rings $R$ and integers $n$, so that the graph $TD(R,n)$ is regular.
\end{rmk}
\section{Domination number}
Let $G$ be a graph. If $G$ has no isolated vertices, then $\gamma(G)\leq \dfrac{n}{2}$. It is easy to see that for $k$-regular graph, $\gamma(G)\geq \dfrac{n}{k+1}$. The domination number of a graph and its many variations have been extensively studied in the literature \cite{haynes1}.\\
The next result, which is due to Meki{\v{s}} \cite{Mekis}, gives
a lower bound for domination number of tensor products of graphs.
\begin{thm}\cite{Mekis}
Let $G$ and $H$ be simple graphs. Then
\begin{equation*}\label{zarbedomin}
\gamma(G\otimes H) \geq \gamma(G)+\gamma(H)-1.
\end{equation*}
\end{thm}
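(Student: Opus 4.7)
The plan is to exploit the two natural projections $\pi_G\colon V(G\otimes H)\to V(G)$ and $\pi_H\colon V(G\otimes H)\to V(H)$, and to repackage a minimum dominating set as a bipartite graph between its $G$- and $H$-images. Let $D$ be a minimum dominating set of $G\otimes H$ and write $D_G=\pi_G(D)$, $D_H=\pi_H(D)$.

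First I would check that $D_G$ dominates $G$ and $D_H$ dominates $H$. If $u\in V(G)\setminus D_G$, pick any $v\in V(H)$; then $(u,v)\notin D$, so $(u,v)$ is dominated by some $(u',v')\in D$, and the definition of the tensor product forces $u\sim_G u'$ with $u'\in D_G$. Hence $|D_G|\ge\gamma(G)$ and, symmetrically, $|D_H|\ge\gamma(H)$.

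Next I would view $D$ as the edge set of a bipartite graph $B$ with parts $D_G$ and $D_H$, placing an edge $\{u,v\}$ for every $(u,v)\in D$; by construction no vertex of $B$ is isolated and $|E(B)|=|D|$. The elementary inequality $|E(B)|\ge |V(B)|-c(B)$, where $c(B)$ is the number of connected components of $B$, then yields
\[
|D|\;\ge\;|D_G|+|D_H|-c(B)\;\ge\;\gamma(G)+\gamma(H)-c(B),
\]
so the theorem will follow once the ``excess'' $c(B)-1$ is absorbed.

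The hard part is precisely the control of $c(B)$, since $B$ can genuinely be disconnected. If $B$ splits into components with parts $(U_1,W_1),\ldots,(U_k,W_k)$, then for $i\neq j$ no pair in $U_i\times W_j$ lies in $D$, so every such cross-pair must be dominated through some edge of $D$; tracing the cases of the tensor-product adjacency shows that this forces either a $G$-edge between two distinct $U_\ell$'s or an $H$-edge between two distinct $W_\ell$'s. I would parlay this into the refined estimate $|D_G|+|D_H|\ge\gamma(G)+\gamma(H)+c(B)-1$, essentially by showing that each extra component of $B$ produces an extra vertex in $D_G$ or $D_H$ beyond the minima $\gamma(G)$ and $\gamma(H)$. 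Substituting this back into the bipartite inequality cancels the $c(B)$ term and gives $|D|\ge\gamma(G)+\gamma(H)-1$. Converting the informal ``cross-component forcing'' into this sharp linear bound is the main technical obstacle.
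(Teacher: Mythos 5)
The paper does not actually prove this statement---it is quoted from Meki{\v{s}} \cite{Mekis}---so your attempt has to be judged on its own terms. Your first two steps are correct: the projections $D_G=\pi_G(D)$ and $D_H=\pi_H(D)$ of a dominating set of $G\otimes H$ are dominating sets of $G$ and $H$ (the argument via an arbitrary $v\in V(H)$ is fine, since $(u,v)\notin D$ whenever $u\notin D_G$), and the bipartite bookkeeping $|D|=|E(B)|\ge |D_G|+|D_H|-c(B)$ is elementary. The problem is that the entire content of the theorem has been deferred to the unproven ``refined estimate'' $|D_G|+|D_H|\ge\gamma(G)+\gamma(H)+c(B)-1$, and in the critical case this estimate is not a reduction at all. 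If $D=\{(u_1,w_1),\dots,(u_k,w_k)\}$ has all first coordinates distinct and all second coordinates distinct, then $B$ is a perfect matching, $c(B)=k$ and $|D_G|=|D_H|=k$, so your refined estimate reads $2k\ge\gamma(G)+\gamma(H)+k-1$, i.e.\ $|D|\ge\gamma(G)+\gamma(H)-1$: it is the theorem itself, verbatim, in exactly the configuration where the edge count $|E(B)|\ge|V(B)|-c(B)$ is tight and gives nothing.

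The cross-component observation you invoke is true but too weak to close this gap. In the matching case it says that for $i\ne j$ the vertex $(u_i,w_j)$ must be dominated by some $(u_\ell,w_\ell)$ with $\ell\notin\{i,j\}$, $u_i$ adjacent to $u_\ell$ and $w_j$ adjacent to $w_\ell$; pushing this as far as it easily goes shows that each $D_G\setminus\{u_i\}$ still dominates $G$ (and symmetrically for $H$), hence $k\ge\max\{\gamma(G),\gamma(H)\}+1$. That is an additive $+1$ over one factor, not the additive bound $\gamma(G)+\gamma(H)-1$ over both, and the discrepancy is unbounded when both domination numbers are large. So the heuristic ``each extra component of $B$ forces an extra vertex in $D_G$ or $D_H$ beyond the minima'' is precisely what fails to be justified, and I do not see how the component decomposition of $B$ alone can deliver it; Meki{\v{s}}'s published argument proceeds instead by a fiber-counting case analysis on the sets $D^u=\{v:(u,v)\in D\}$. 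As written, your proposal proves only $\gamma(G\otimes H)\ge\max\{\gamma(G),\gamma(H)\}$ together with an honest description of what remains open.
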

It is of interest to find the domination number of graphs associated to rings, see \cite{domintotal} and \cite{dominzero}.
In this section, we would like to study the domination number of $\gamma(TD(R,n))$. It is easy to see that 
\begin{equation}\label{dominineqn}
\gamma(TD(R,n))\leq \gamma(TD(R,n-1)).
\end{equation}
In the next theorem we find the domination number of $TD(\mathbb{F},n)$.
\begin{thm}
Let $\mathbb{F}$ be a field with $q$ elements. Let $n>1$ be an integer. Then
\begin{equation}\gamma(TD(\mathbb{F},n))=\left\{
\begin{array}{cc}
2 &~\textrm{\rm{if}}~ \mathbb{F}\simeq\mathbb{Z}_2~\textrm{and}~ n=3,\\
q+1 & \textrm{\rm{otherwise}}.\
\end{array} \right.
\end{equation}
\end{thm}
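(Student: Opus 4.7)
The argument splits naturally into an upper bound by explicit construction and a lower bound by a double-counting analysis of closed neighborhoods, with the exceptional case $(\mathbb{Z}_2,3)$ handled by direct inspection.

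For the upper bound $\gamma(TD(\mathbb{F},n))\le q+1$, I would take
\[
D=\{(1,\lambda,0,\ldots,0):\lambda\in\mathbb{F}\}\cup\{e_2\},
\]
which has $q+1$ elements. If $v=(v_1,\ldots,v_n)$ has $v_2=0$ then $e_2\cdot v=0$ and $v\ne e_2$, while if $v_2\ne 0$ then $\lambda=-v_1/v_2$ gives $(1,\lambda,0,\ldots,0)\cdot v=0$ (with the degenerate sub-case $v\in D$ being trivially dominated). For the exceptional case $(\mathbb{F},n)=(\mathbb{Z}_2,3)$, Figure~\ref{fig:sample1} makes it clear that $\{(1,0,0),(1,1,1)\}$ dominates the seven-vertex graph, and $\gamma\ge 2$ is immediate since the maximum degree is $3<6$.

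For the lower bound, assume $n\ge 3$ and $(q,n)\ne(2,3)$, and suppose $|D|=q$ gave a dominating set. From $|N[d]|\le q^{n-1}$ we obtain $\sum_{d\in D}|N[d]|\le q^n$, so writing $m_v=|\{d\in D:v\in N[d]\}|$ and using $|\bigcup_{d\in D}N[d]|\ge q^n-1$ yields
\[
\sum_{i<j}|N[d_i]\cap N[d_j]|=\sum_v\binom{m_v}{2}\;\le\;\sum_{d\in D}|N[d]|-\Bigl|\bigcup_{d\in D}N[d]\Bigr|\;\le\;1.
\]
But $N[d_i]\cap N[d_j]\supseteq(d_i^\perp\cap d_j^\perp)\setminus\{0\}$ has size $q^{n-2}-1$ when $d_i,d_j$ are linearly independent, and size $q^{n-1}-1$ (strictly larger) when they are scalar multiples. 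Thus
\[
\binom{q}{2}(q^{n-2}-1)\;\le\;1,
\]
which fails for $q\ge 3$ with $n\ge 3$ and for $q=2$ with $n\ge 4$; the only surviving case is $(q,n)=(2,3)$. The case $n=2$ is handled separately from Theorem~\ref{n2}: the graph is a disjoint union of copies of $K_{q-1}$ (with $\gamma=1$ each) and copies of $K_{q-1,q-1}$ (with $\gamma=2$ each when $q\ge 3$), and the component tallies listed in Theorem~\ref{n2} sum to $q+1$ in each of the three congruence sub-cases.

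I expect the main obstacle to be the careful extremal analysis of $\binom{q}{2}(q^{n-2}-1)\le 1$: one must verify both that this inequality fails everywhere except at $(q,n)=(2,3)$ and that the scalar-multiple case cannot circumvent the pairwise intersection bound. The critical arithmetic is $\binom{2}{2}=1$ together with $q^{n-2}-1=1$, which singles out $q=2,\ n=3$ as the unique leak of the argument.
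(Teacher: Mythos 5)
Your proposal follows essentially the same route as the paper: the same explicit dominating set of size $q+1$ (yours is the paper's $\{(a,1,0,\dots,0)\}\cup\{e_1\}$ with the two coordinates swapped), direct inspection of $TD(\mathbb{Z}_2,3)$, Theorem~\ref{n2} for $n=2$, and for the lower bound a count of how many vertices $q$ closed neighborhoods can cover, using the fact that two hyperplanes meet in at least $q^{n-2}$ points. One step of your lower bound is mis-stated, however: the inequality
\[
\sum_v\binom{m_v}{2}\;\le\;\sum_{d\in D}\bigl|N[d]\bigr|-\Bigl|\bigcup_{d\in D}N[d]\Bigr|
\]
is false in general, since the right-hand side equals $\sum_v (m_v-1)$ (over $v$ with $m_v\ge 1$) and $\binom{m}{2}>m-1$ for $m\ge 3$; the general Bonferroni inequality goes the other way. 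The step is repairable --- once you know $\sum_v(m_v-1)\le 1$ you know every $m_v\le 2$, and then $\binom{m_v}{2}=m_v-1$ so your conclusion $\sum_{i<j}|N[d_i]\cap N[d_j]|\le 1$ does hold --- but the all-pairs refinement buys you nothing: a \emph{single} pair already gives $q^{n-2}-1\le 1$, hence $q^{n-2}\le 2$, which isolates $(q,n)=(2,3)$ exactly as your $\binom{q}{2}(q^{n-2}-1)\le 1$ does. This one-pair version is precisely what the paper does, so you should either drop the summation over all pairs or justify the displayed inequality via the ``all $m_v\le2$'' observation.

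One further point, which is a defect of the statement itself rather than of your argument: for $\mathbb{F}=\mathbb{Z}_2$ and $n=2$ the graph consists of the edge $(1,0)\,\text{--}\,(0,1)$ together with the isolated vertex $(1,1)$, so $\gamma=2$, not $q+1=3$. Your own tally already hints at this ($K_{1,1}$ has domination number $1$, not $2$), and the paper's appeal to Theorem~\ref{n2} glosses over the same case; it would be worth flagging $(\mathbb{Z}_2,2)$ as a second exception.
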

\begin{proof}
If $\mathbb{F}\simeq\mathbb{Z}_2$ and $n=3$, then by Figure (\ref{fig:sample1}) one can easily check that $\gamma(TD(\mathbb{F},n))=2$.
By Theorem \ref{n2}, we can see that $\gamma(TD(\mathbb{F},2))=q+1$. Let $D=\{(a,1,0,\ldots,0)\mid a\in \mathbb{F} \} \cup \{ (1,0,0,\ldots,0) \}$. It is fairly easy to see that $D$ is a dominating set. Let $\mathbb{F}\neq\mathbb{Z}_2$ or $n\neq 3$, we prove that $\gamma(TD(\mathbb{F},n))$ cannot be less than $q+1$. On the contrary, assume that $D=\{\mathbf{d_{1}},\ldots,\mathbf{d_{q}} \}$ is a dominating set for $TD(\mathbb{F},n)$. By Corollary \ref{field}, each $\mathbf{d_{i}}$ can dominate at most $q^{n-1}$ vertices. Obviously, the system of equations
\begin{equation*}\left\{
\begin{array}{c}
\mathbf{d_{1}}\cdot\mathbf{x}=0\\
\mathbf{d_{2}}\cdot\mathbf{x}=0\
\end{array} \right.
\end{equation*}
has more than $q^{n-2}-1>1$ non-trivial solutions. Therefore, the set $D$ dominates at most $q(q^{n-1})-2$ vertices, which means that $D$ is not a dominating set for $TD(R,n)$.
\end{proof}
The aforementioned theorem shows that inequality (\ref{dominineqn}) can be strict or can turn into equality.
\begin{rmk}
Let $R$ be a ring which is not a field. Let $r$ be a non-zero non-invertible element of $R$. Thus the equation $rx+1=0$ has no solution in $R$. Then $D=\{(a,1,0,\ldots,0)\mid a\in R \} \cup \{ (1,0,0,\ldots,0) \}$ is not a dominating set. 
\end{rmk}
The next two theorems give upper bounds for the domination number.
\begin{thm}
Let $R$ be a finite ring which is not a field. Let $n>1$ be an integer. Then $\gamma(TD(R,n))\leq |R-U(R)|^2-1$.
\end{thm}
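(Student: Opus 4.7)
The plan is to construct an explicit dominating set of the claimed cardinality. Set $Z := R - U(R)$ (the set of non-units), and define
\[
D := \{(a, b, 0, \ldots, 0) \in R^n : a, b \in Z\} \setminus \{\mathbf{0}\},
\]
which has exactly $|Z|^2 - 1$ vertices. Since $R$ is not a field, $Z$ contains a nonzero element, so $D$ is nonempty, and since $n > 1$ the first two coordinates are actually available.

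The argument rests on the elementary ring-theoretic fact that $Z$ is closed under multiplication by arbitrary elements of $R$: if $z$ is a non-unit and $r \in R$, then $rz$ cannot be a unit (else $z$ itself would be). I would then verify that every vertex $\mathbf{x} = (x_1, \ldots, x_n) \notin D$ has a neighbor in $D$ by producing a pair $(a, b) \in Z^2 \setminus \{(0, 0)\}$ satisfying $a x_1 + b x_2 = 0$; the vertex $(a, b, 0, \ldots, 0)$ then lies in $D$, and $\mathbf{x} \cdot (a, b, 0, \ldots, 0) = a x_1 + b x_2 = 0$ regardless of the remaining coordinates $x_3, \ldots, x_n$.

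The case split is natural. If at least one of $x_1, x_2$ is a unit --- say $x_1 \in U(R)$ --- I choose any nonzero $b \in Z$ and put $a := -b x_2 x_1^{-1}$; the closure property forces $a \in Z$, and by construction $a x_1 + b x_2 = 0$ with $(a, b) \neq (0,0)$. If instead both $x_1, x_2 \in Z$ and at least one is nonzero, then $(a, b) := (x_2, -x_1) \in Z^2 \setminus \{(0,0)\}$ already satisfies the equation via the commutativity identity $x_2 x_1 - x_1 x_2 = 0$; if $x_1 = x_2 = 0$, any $(a, b) = (z, 0)$ with $z \in Z \setminus \{0\}$ serves as a replacement.

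The only bookkeeping subtlety --- and the mildest of obstacles --- is confirming that the dominator just produced is \emph{distinct} from $\mathbf{x}$, so that adjacency (rather than equality) certifies domination. This is immediate: in the unit case, $a \in Z$ while $x_1 \in U(R)$, forcing $a \neq x_1$; in the non-unit case, the assumption $\mathbf{x} \notin D$ forces some $x_i$ with $i \geq 3$ to be nonzero, whereas the dominator has zeros in those coordinates. Hence $D$ is a dominating set of size $|Z|^2 - 1 = |R - U(R)|^2 - 1$, as required.
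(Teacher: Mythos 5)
Your proof is correct and uses exactly the same dominating set $\{(a,b,0,\ldots,0) : a,b\in R-U(R)\}\setminus\{\mathbf{0}\}$ with essentially the same three-way case analysis as the paper (the paper's choice in the two-units case is $(-x_2z,\,x_1z,\,0,\ldots,0)$ for a nonzero zero-divisor $z$, while yours is $(-bx_2x_1^{-1},\,b,\,0,\ldots,0)$; both work for the same reason). You are in fact slightly more careful than the paper, which omits the subcase $x_1=x_2=0$ and the verification that the produced dominator is distinct from $\mathbf{x}$.
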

\begin{proof}
We show that $\{(r,s,0,\ldots,0)\mid r,s \in R-U(R) ~and~ (r,s)\neq (0,0)\}$ is a dominating set for $TD(R,n)$. Let $\mathbf{a}=(a_1,a_2,\ldots,a_n)\in R^{n}$. We have three cases:
\begin{itemize}
\item[(i)]
If $a_1,a_2$ are invertible. Let $z\in Z(R)-\{0\}$. Then $\mathbf{a}$ is adjacent to $(-a_2z,a_1z,0,\ldots,0)$.
\item[(ii)]
If $a_1$ is invertible but $a_2$ is not invertible. Hence, there exists $z\in Z(R)-\{0\}$ such that $za_2=0$. Then $\mathbf{a}$ is adjacent to $(0,z,0,\ldots,0)$.
\item[(iii)]
If $a_1,a_2$ both are not invertiable. In this case, $\mathbf{a}$ is adjacent to $(-a_2,a_1,\ldots,0)$.
\end{itemize}
\end{proof}
\begin{thm}
Let $R$ be a finite ring which is not a field. Let $n>1$ be an integer. Then $\gamma(TD(R,n))\leq |R-U(R)|+|R|-2$.
\end{thm}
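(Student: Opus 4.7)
The plan is to exhibit an explicit dominating set of $TD(R,n)$ of size $|R-U(R)|+|R|-2$. Writing $Z=R-U(R)$, the candidate is
\[
D\;=\;\{(z,0,0,\ldots,0):z\in Z\setminus\{0\}\}\,\cup\,\{(0,z,0,\ldots,0):z\in Z\setminus\{0\}\}\,\cup\,\{(u,1,0,\ldots,0):u\in U(R)\}.
\]
The three families are pairwise disjoint (the first has nonzero first coordinate and zero second; the second has zero first coordinate and nonzero second; the third has second coordinate $1\notin Z$), so $|D|=(|Z|-1)+(|Z|-1)+|U(R)|=2|Z|-2+(|R|-|Z|)=|Z|+|R|-2$, matching the target.

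To show $D$ dominates, I would split on the invertibility of the first two coordinates of a vertex $\mathbf{v}=(v_1,v_2,\ldots,v_n)$. If both $v_1,v_2\in U(R)$, then $u=-v_2v_1^{-1}\in U(R)$ and $(u,1,0,\ldots,0)\in D$ is adjacent to $\mathbf{v}$ because $uv_1+v_2=0$. If $v_1\in Z$, then either $v_1=0$, in which case every element of the first family is orthogonal to $\mathbf{v}$, or $v_1$ is a nonzero non-unit of the finite commutative ring $R$ and hence a zero-divisor, so $\mathrm{Ann}(v_1)\setminus\{0\}$ is a nonempty subset of $Z\setminus\{0\}$ and any $z$ therein furnishes the dominator $(z,0,0,\ldots,0)\in D$. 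The remaining case $v_1\in U(R),\,v_2\in Z$ is handled symmetrically using the middle family.

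The main point requiring care, though not a deep obstacle, is to check that in each case the exhibited dominator is distinct from $\mathbf{v}$ itself, so that it is a genuine neighbour in the simple graph $TD(R,n)$. The only delicate subcase is when $v_1\in Z\setminus\{0\}$ has $\mathrm{Ann}(v_1)=\{0,v_1\}$, forcing $v_1^2=0$ and $z=v_1$; there either $\mathbf{v}=(v_1,0,\ldots,0)$ already lies in $D$ and needs no domination, or some coordinate of $\mathbf{v}$ beyond the second is nonzero, in which case $(v_1,0,\ldots,0)\neq\mathbf{v}$ and the dot product $v_1^2=0$ still certifies adjacency. Assembling these steps gives $\gamma(TD(R,n))\leq|D|=|R-U(R)|+|R|-2$.
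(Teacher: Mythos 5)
Your proposal is correct and uses essentially the same dominating set and case analysis as the paper (the three families $\{(z,0,\ldots,0)\}$, $\{(0,z,\ldots,0)\}$, $\{(u,1,0,\ldots,0)\}$ with the split on invertibility of the first two coordinates). The only difference is that you explicitly verify the dominator is distinct from the vertex being dominated, a detail the paper leaves implicit.
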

\begin{proof}
Let $A_{1}=\{(r,0,0,\ldots,0)\mid r\in R-U(R) ~and~ r\neq 0\}$ and $A_{2}=\{(0,s,0,\ldots,0)\mid s\in R-U(R) ~and~ r\neq 0\}$ and $A_{3}= \{(u,1,0,\ldots,0)\mid u\in U(R)\}$. We show that $A_{1}\cup A_{2}\cup A_{3}$ is a dominating set for $TD(R,n)$. Let $\mathbf{a}=(a_1,a_2,\ldots,a_n)\in R^{n}$. We have three cases:
\begin{itemize}
\item[(i)]
If $a_1,a_2$ are invertiable. Then $\mathbf{a}$ is adjacent to $(-a_2a_{1}^{-1},1,0,\ldots,0)$.
\item[(ii)]
If $a_1$ is not invertible. Hence, there exists $z\in Z(R)-\{0\}$ such that $za_1=0$. Then $\mathbf{a}$ is adjacent to $(z,0,0,\ldots,0)$.
\item[(iii)]
If $a_2$ is not invertible. Hence, there exists $z\in Z(R)-\{0\}$ such that $za_2=0$. Then $\mathbf{a}$ is adjacent to $(0,z,0,\ldots,0)$.
\end{itemize}
\end{proof}
Finally, we prove that if $R$ is an infinite ring with some restrictions, then the domination number of $TD(R,n)$ is also infinite.\\
The following well-known lemma is the key for the rest of this section.
\begin{lem}\label{infinitetotal}
Let $V$ be a vector space over a field $\mathbb{F}$. If $V$
is written as union of $k$ proper subspaces of $V$, then $k\geq |\mathbb{F}|$.
In particular, if $\mathbb{F}$ is an infinite field, then $V$ cannot be written as union of a finite number of proper subspaces.
\end{lem}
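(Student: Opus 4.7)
The plan is to argue by contradiction: assume $V = \bigcup_{i=1}^{k} W_i$ with each $W_i$ a proper subspace, reduce to an \emph{irredundant} cover (discarding any $W_i$ contained in the union of the others), and then produce a contradiction unless $k \geq |\mathbb{F}|$. The geometric core is the ``line trick'': inside $V$ I exhibit an affine line with exactly $|\mathbb{F}|$ points and show these points must be spread among the $W_i$ with at most one per subspace. After the irredundancy reduction one still has $k \geq 2$ since $W_1$ is proper, so the argument has something to work with.

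First I would choose a point $x \in W_1 \setminus \bigcup_{i=2}^{k} W_i$, which exists precisely because the cover is irredundant. Note that $x \neq 0$, since $0$ belongs to every subspace. Then I would choose $y \in V \setminus W_1$, which exists since $W_1$ is proper. The key object is the affine line
\[
L \;=\; \{\, y + tx \;:\; t \in \mathbb{F} \,\},
\]
which has exactly $|\mathbb{F}|$ distinct points because $x \neq 0$. Step one: no point of $L$ lies in $W_1$, for if $y + tx \in W_1$, then subtracting $tx \in W_1$ forces $y \in W_1$, contradicting the choice of $y$. Thus $L \subseteq \bigcup_{i=2}^{k} W_i$.

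Step two: each $W_i$ with $i \geq 2$ contains at most one point of $L$. Indeed, if $y + t_1 x$ and $y + t_2 x$ both lie in $W_i$ with $t_1 \neq t_2$, then $(t_1 - t_2)x \in W_i$, hence $x \in W_i$, contradicting the choice of $x$. The pigeonhole principle then gives
\[
|\mathbb{F}| \;=\; |L| \;\leq\; k - 1,
\]
which is strictly stronger than $k \geq |\mathbb{F}|$. The ``in particular'' clause is immediate: if $\mathbb{F}$ is infinite, no finite $k$ can satisfy this inequality.

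I do not anticipate a serious obstacle; the delicate points are all handled by the irredundancy reduction. One must verify that $x$ can indeed be chosen nonzero (so that $L$ really has $|\mathbb{F}|$ points) and that $y$ exists (so that $L$ avoids $W_1$); both follow respectively from irredundancy (plus $0 \in W_i$ for every $i$) and from $W_1$ being proper. The most ``subtle'' conceptual step is recognising that the whole computation takes place inside the single affine line $y + \mathbb{F} x$, turning what looks like a global covering problem into a finite counting problem.
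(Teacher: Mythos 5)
Your proof is correct: the irredundancy reduction, the choice of $x\in W_1\setminus\bigcup_{i\ge 2}W_i$ and $y\notin W_1$, and the counting along the affine line $y+\mathbb{F}x$ together give $|\mathbb{F}|\le k-1$, which implies the stated bound and the infinite case. The paper itself offers no proof --- it cites the lemma as well-known --- so there is nothing to compare against; your argument is the standard one and would serve as a complete proof here.
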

Here $H_{\mathbf{a}}$ denote the hyperplane $\mathbf{a}\cdot \mathbf{x}=0$.
\begin{thm}\label{fielddomin}
Let $\mathbb{F}$ be a field. Then $\gamma(TD(\mathbb{F},n))$ is finite if and only if $\mathbb{F}$ is a finite field.
\end{thm}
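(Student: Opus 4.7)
The plan is to prove both directions, with the forward direction trivial and the contrapositive of the reverse direction using Lemma~\ref{infinitetotal}.

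If $\mathbb{F}$ is finite, then $TD(\mathbb{F},n)$ has only $|\mathbb{F}|^n-1$ vertices, so the full vertex set is a finite dominating set and $\gamma(TD(\mathbb{F},n))<\infty$ automatically. The substance is the converse, which I prove in the contrapositive form: if $\mathbb{F}$ is infinite then no finite dominating set exists. The case $n=1$ is immediate because $TD(\mathbb{F},1)$ has no edges (in a field $ab=0$ forces $a=0$ or $b=0$), so every vertex must lie in any dominating set and $\gamma = |\mathbb{F}|-1=\infty$.

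Now assume $n\geq 2$ and suppose toward contradiction that $D=\{\mathbf{a}_1,\dots,\mathbf{a}_k\}$ is a finite dominating set. The key geometric observation, already recorded above the lemma, is that the closed neighborhood of each $\mathbf{a}_i$ is contained in the hyperplane $H_{\mathbf{a}_i}=\{\mathbf{x}\in\mathbb{F}^n:\mathbf{a}_i\cdot\mathbf{x}=0\}$ together with the single point $\mathbf{a}_i$ itself. Consequently the dominating property forces
\begin{equation*}
\mathbb{F}^n\setminus\{0\}\;\subseteq\; D\cup\bigcup_{i=1}^{k}H_{\mathbf{a}_i},
\end{equation*}
and since $\mathbf{0}$ lies in every $H_{\mathbf{a}_i}$, in fact $\mathbb{F}^n\subseteq D\cup\bigcup_{i}H_{\mathbf{a}_i}$.

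To reach a contradiction I invoke Lemma~\ref{infinitetotal}: because $\mathbb{F}$ is infinite, the finite union $\bigcup_{i=1}^{k}H_{\mathbf{a}_i}$ of proper subspaces cannot exhaust $\mathbb{F}^n$, so there exists $\mathbf{v}\in\mathbb{F}^n\setminus\bigcup_{i}H_{\mathbf{a}_i}$. Since each $H_{\mathbf{a}_i}$ is a subspace and $\mathbf{v}\notin H_{\mathbf{a}_i}$, no nonzero scalar multiple $c\mathbf{v}$ lies in any $H_{\mathbf{a}_i}$ either. Hence every vector in the infinite set $\{c\mathbf{v}:c\in\mathbb{F}^{\ast}\}$ must lie in $D$, contradicting $|D|=k<\infty$. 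The only place to be careful is the passage from ``$\mathbf{v}$ avoids the hyperplanes'' to ``all multiples of $\mathbf{v}$ avoid the hyperplanes,'' which is the crucial use of the subspace (not merely affine) structure of $H_{\mathbf{a}_i}$; once that is noted, the contradiction falls out without further computation.
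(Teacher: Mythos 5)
Your proof is correct and follows essentially the same route as the paper: assume a finite dominating set, observe that each closed neighborhood lies in the hyperplane $H_{\mathbf{a}_i}$, and invoke Lemma~\ref{infinitetotal} to conclude $\mathbb{F}$ must be finite. You actually patch a small gap the paper leaves implicit --- the finitely many points of $D$ itself are not covered by the hyperplane union, and your scalar-multiple argument disposes of them cleanly --- but the underlying idea and key lemma are identical.
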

\begin{proof}
On the contrary, assume that $D=\{\mathbf{a}_1,\ldots,\mathbf{a}_k \}$ is a dominating set for $TD(\mathbb{F},n)$. We know that $N(\mathbf{a}_i)\subset H_{\mathbf{a}_i}$. Therefore, $\bigcup_{i=1}^{k} N(\mathbf{a}_i)\subset \bigcup_{i=1}^{k} H_{\mathbf{a}_i}$. Then by Lemma \ref{infinitetotal}, it follows that $\mathbb{F}$ should be finite.
\end{proof}
\begin{lem}
Let $R$ be a ring and $m$ be a maximal ideal such that $R/m$ has infinitely many elements. Then $R^{n}$ cannot be written as union of a finite number of proper $R$-submodules.
\end{lem}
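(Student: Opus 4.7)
The plan is to pass to the quotient by $mR^n$ and reduce to Lemma \ref{infinitetotal} applied over the infinite field $R/m$. Assume for contradiction that $R^n = N_1 \cup \cdots \cup N_k$ with each $N_i$ a proper $R$-submodule, and consider the canonical projection $\pi \colon R^n \to V := R^n/mR^n$. Then $V \cong (R/m)^n$ is a vector space over the infinite field $R/m$, the image $\pi(N_i) = (N_i + mR^n)/mR^n$ of each $N_i$ is an $R/m$-subspace of $V$, and $V = \bigcup_{i=1}^k \pi(N_i)$. If each $\pi(N_i)$ were a proper subspace of $V$, Lemma \ref{infinitetotal} would immediately yield a contradiction.

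The technical obstacle is that a proper submodule $N_i$ may still satisfy $N_i + mR^n = R^n$, in which case $\pi(N_i) = V$. To handle this, I split the cover into two groups: after relabeling, let $\pi(N_1) = \cdots = \pi(N_r) = V$ while $\pi(N_{r+1}), \ldots, \pi(N_k)$ are proper subspaces. Lemma \ref{infinitetotal} applied to the latter family guarantees that their union is a strict subset of $V$, so I can choose $\bar{v} \in V$ outside it and lift to $v \in R^n$. For every $w \in mR^n$, $\pi(v+w) = \bar{v}$ still avoids $\pi(N_{r+1}) \cup \cdots \cup \pi(N_k)$, so $v + w$ must lie in some $N_j$ with $j \leq r$. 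Thus the coset $v + mR^n$ is covered by $N_1 \cup \cdots \cup N_r$; translating by $-v$ and intersecting with $mR^n$ yields a covering of $mR^n$ by finitely many cosets of $M_j := N_j \cap mR^n$, and each such $M_j$ is a proper submodule of $mR^n$ (otherwise $mR^n \subseteq N_j$ combined with $N_j + mR^n = R^n$ would force $N_j = R^n$, contradicting properness).

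The main remaining difficulty is to derive a contradiction from this covering of $mR^n$ by cosets of its proper $R$-submodules. The natural route is to iterate the same reduction inside $mR^n$, producing analogous covers of $m^2R^n, m^3R^n, \ldots$, and then close with a termination principle such as Krull's intersection theorem; alternatively, one can strengthen Lemma \ref{infinitetotal} to the coset version over infinite fields (observing that a finite union of proper affine subspaces cannot exhaust a vector space over an infinite field) and run the analysis on $V$ in one pass. In either route the crucial input is Lemma \ref{infinitetotal} applied to the infinite field $R/m$; the bookkeeping involved in handling the ``Type II'' submodules satisfying $N_i + mR^n = R^n$ is where the main technical work lies.
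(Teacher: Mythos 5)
Your reduction to $V=R^{n}/mR^{n}$ is the same opening move as the paper's own proof (which tensors with $R/m$), and you are right to flag the obstacle that the paper silently skips: a proper submodule $N_{i}$ may satisfy $N_{i}+mR^{n}=R^{n}$, so its image in $V$ need not be a proper subspace. But your proposal stops exactly where the real work would have to happen: you reduce to ``derive a contradiction from a covering of $mR^{n}$ by cosets of proper submodules'' and only sketch two candidate ways to finish (iterate and invoke Krull intersection, or prove an affine analogue of Lemma \ref{infinitetotal}). Neither can succeed, because the lemma is false as stated. Take $R=\mathbb{Q}\times\mathbb{F}_{2}$, $m=0\times\mathbb{F}_{2}$ (so $R/m\simeq\mathbb{Q}$ is infinite) and $n=2$. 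Writing $R^{2}\simeq\mathbb{Q}^{2}\times\mathbb{F}_{2}^{2}$ with $R$ acting factorwise, the three sets $\mathbb{Q}^{2}\times L$, where $L$ runs over the three one-dimensional subspaces of $\mathbb{F}_{2}^{2}$, are proper $R$-submodules whose union is all of $R^{2}$. In your terminology all three are ``Type II'': each has full image in $V$, and $mR^{2}=0\times\mathbb{F}_{2}^{2}$ genuinely is a finite union of proper submodules, so there is no contradiction left to extract.

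The statement does become true, and your argument closes in one line, under the additional hypothesis that $m$ lies in the Jacobson radical of $R$ (for instance, $R$ local with maximal ideal $m$): then $N_{i}+mR^{n}=R^{n}$ forces $N_{i}=R^{n}$ by Nakayama's lemma, since $R^{n}$ is finitely generated, so every $\pi(N_{i})$ is a proper subspace of $(R/m)^{n}$ and Lemma \ref{infinitetotal} gives the contradiction. So the honest verdict is: your analysis is more careful than the paper's proof, which asserts without justification that the subspaces $V_{i}\otimes_{R}R/m$ are proper, but the difficulty you isolate is not a technicality to be engineered around --- it is a genuine counterexample to the lemma in the stated generality (and the same example, using the hyperplanes $H_{\mathbf{a}}$ with $\mathbf{a}\in 0\times\mathbb{F}_{2}^{2}$, yields a finite dominating set for $TD(\mathbb{Q}\times\mathbb{F}_{2},2)$, so the theorem that depends on this lemma needs the same strengthened hypothesis).
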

\begin{proof}
On the contrary, assume that $R^{n}=\bigcup_{i=1}^{d}V_{i}$, where $V_{i}$ are $R$-submodules of $R^{n}$. It is known that $R^{n}\otimes_{R} R/m$ is a vector space over the field $R/m$. Then $$R^{n}\otimes_{R} R/m=\Big(\bigcup_{i=1}^{d}V_{i}\otimes_{R} R/m \Big)=\bigcup_{i=1}^{d}(V_{i}\otimes_{R} R/m).$$ Since $R/m$ is infinite and $V_{i}\otimes_{R} R/m$ are vectore subspaces, we get contradiction by Lemma \ref{infinitetotal}.
\end{proof}
\begin{thm}
Let $R$ be a ring and $m$ be a maximal ideal such that $R/m$ has infinite elements. Then $\gamma(TD(R,n))$ is not finite.
\end{thm}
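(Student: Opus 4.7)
My plan is to mimic the strategy of Theorem~\ref{fielddomin}, replacing the hyperplane/vector-space ingredient (Lemma~\ref{infinitetotal}) with the module-theoretic lemma just proved. Assume $n \geq 2$ and suppose, toward a contradiction, that $D = \{\mathbf{a}_1, \ldots, \mathbf{a}_k\}$ is a finite dominating set for $TD(R,n)$.

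First I would verify that each $Z(\mathbf{a}_i)$ is a proper $R$-submodule of $R^n$. The $R$-linearity of $\mathbf{x} \mapsto \mathbf{a}_i \cdot \mathbf{x}$ makes $Z(\mathbf{a}_i)$ a submodule, and since $\mathbf{a}_i \neq \mathbf{0}$ some coordinate $a_{ij}$ is nonzero, which forces $e_j \notin Z(\mathbf{a}_i)$ and hence $Z(\mathbf{a}_i) \subsetneq R^n$. The domination hypothesis then yields $R^n \setminus D \subseteq \bigcup_{i=1}^k Z(\mathbf{a}_i)$, since every vertex outside $D$ must be adjacent to some $\mathbf{a}_i$; together with $\mathbf{0} \in Z(\mathbf{a}_i)$ for all $i$, this gives $R^n = \bigcup_{i} Z(\mathbf{a}_i) \cup D$.

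The main obstacle is that the preceding lemma forbids writing $R^n$ as a finite union of proper submodules, while the domination argument only furnishes such a covering up to the finite exceptional set $D$. To close this gap I would enlarge the collection by adjoining the cyclic submodules $R\mathbf{a}_i$: each $R\mathbf{a}_i$ automatically contains $\mathbf{a}_i$, and for $n \geq 2$ it is proper in $R^n$ because $R^n$ is not cyclic as an $R$-module (any cyclic module is a quotient of $R$ and hence tensors with $R/m$ to a space of dimension at most one, whereas $R^n \otimes_R R/m \cong (R/m)^n$ has dimension $n \geq 2$). This yields
\[
R^n \;=\; \bigcup_{i=1}^{k} Z(\mathbf{a}_i) \;\cup\; \bigcup_{i=1}^{k} R\mathbf{a}_i,
\]
which exhibits $R^n$ as a finite union of proper $R$-submodules and contradicts the preceding lemma. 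The case $n=1$ is handled separately, as $TD(R,1)$ is essentially the zero-divisor graph and any non-zero-divisor of $R$ is an isolated vertex that must lie in $D$.
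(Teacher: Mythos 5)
Your argument is essentially the paper's: the published proof consists of the single remark that the argument of Theorem \ref{fielddomin} goes through with the module-theoretic lemma replacing Lemma \ref{infinitetotal}, and that is exactly what you carry out, observing that $N(\mathbf{a}_i)\subseteq Z(\mathbf{a}_i)$ with each $Z(\mathbf{a}_i)$ a proper $R$-submodule. You are in fact somewhat more careful than the source, since the paper's covering only accounts for vertices outside the dominating set $D$; your adjunction of the proper cyclic submodules $R\mathbf{a}_i$ (proper because $R^n$ is not cyclic when $n\geq 2$) repairs that omission, and only your $n=1$ aside remains a sketch.
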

\begin{proof}
 The proof is similar to that of Theorem \ref{fielddomin}.
\end{proof}
\begin{rmk}
Let $R$ be a ring such that $\sup \big\{ |R/m|\mid m~\text{\rm{is a maximal ideal of}}~R \big\}=\infty$. Then $\gamma(TD(R,n))$ is not finite. Rings $\mathbb{Z}$ and $\mathbb{F}[x]$ are such examples.
\end{rmk}
More generally, we have the following theorem:
\begin{thm}
Let $R$ be a ring. Let $\nu =\sup \big\{ |R/m|\mid m\text{\rm{ is a maximal ideal of}}~R \big\}$. Then $\gamma(TD(R,n))\geq \nu$.
\end{thm}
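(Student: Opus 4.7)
The plan is to prove the pointwise bound $\gamma(TD(R,n))\ge |R/m|$ for every maximal ideal $m\subseteq R$, from which the statement follows by taking the supremum over $m$. Fix such an $m$, set $k=R/m$, and let $D=\{\mathbf{a}_1,\dots,\mathbf{a}_d\}$ be an arbitrary dominating set of $TD(R,n)$. Since every nonzero vertex is either in $D$ or adjacent to some $\mathbf{a}_i$, the domination condition translates to the covering
\[
R^n \;\subseteq\; D\;\cup\;\bigcup_{i=1}^d H_{\mathbf{a}_i}, \qquad H_{\mathbf{a}_i}:=\{\mathbf{x}\in R^n:\mathbf{a}_i\cdot\mathbf{x}=0\},
\]
where each $H_{\mathbf{a}_i}$ is a proper $R$-submodule of $R^n$ (since $\mathbf{a}_i\ne 0$).

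Reducing modulo $m$ via $\pi\colon R^n\twoheadrightarrow k^n$, the image $\pi(H_{\mathbf{a}_i})$ lies in the $k$-subspace $\bar H_i=\{\mathbf{y}\in k^n:\bar{\mathbf{a}}_i\cdot\mathbf{y}=0\}$, which is a proper $k$-hyperplane whenever $\bar{\mathbf{a}}_i=\pi(\mathbf{a}_i)\ne 0$. Hence
\[
k^n\;\subseteq\;\pi(D)\;\cup\;\bigcup_{i=1}^d \bar H_i.
\]
Assuming first that every $\bar{\mathbf{a}}_i\ne 0$, the cardinality count $|k|^n\le d+d(|k|^{n-1}-1)+1=d\,|k|^{n-1}+1$ (valid for finite $|k|$ and $n\ge 2$) forces the integer $d$ to satisfy $d\ge |k|$. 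For infinite $|k|$, each of the at most $d$ points of $\pi(D)$ can be enclosed in a proper hyperplane of $k^n$ (possible since $n\ge 2$), expressing $k^n$ as a union of at most $2d$ proper $k$-subspaces; Lemma~\ref{infinitetotal} then yields $d\ge |k|$ as cardinals.

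The main obstacle is the degenerate case in which some $\mathbf{a}_i\in m^n$: here $\bar{\mathbf{a}}_i=0$, $\bar H_i=k^n$, and that index contributes no constraint to the cover of $k^n$. I plan to handle this in the same spirit as the lemma preceding Theorem~\ref{fielddomin}: although $\bar H_i=k^n$, the $R$-submodule $H_{\mathbf{a}_i}\subsetneq R^n$ remains proper, and after passing to the localization $R_m$ (where $mR_m$ is the Jacobson radical and Nakayama's lemma forces every proper submodule of $R_m^n$ to have proper image in the common residue field $k$) one recovers a proper $k$-subspace image for every index. The covering argument then goes through unchanged, yielding $d\ge |k|$ in all cases, and taking the supremum over all maximal ideals gives $\gamma(TD(R,n))\ge\nu$.
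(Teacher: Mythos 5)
The paper states this theorem without proof, so there is no author's argument to compare against; judged on its own, your proposal has a genuine gap, located exactly where you yourself flag the ``main obstacle.'' The non-degenerate part is fine: if every $\mathbf{a}_i\in D$ survives reduction mod $m$, the count $|k|^n\le d\,|k|^{n-1}+1$ (or Lemma~\ref{infinitetotal} for infinite $k$) forces $d\ge |k|$. The problem is the proposed repair by localization. Nakayama's lemma does show that a \emph{proper} submodule of $R_m^n$ has proper image in $k^n$, but the submodule you would apply it to is $(H_{\mathbf{a}_i})_m=\{\mathbf{x}\in R_m^n:\overline{\mathbf{a}}_i\cdot\mathbf{x}=0\}$, where $\overline{\mathbf{a}}_i$ is the image of $\mathbf{a}_i$ in $R_m^n$, and this is proper only if $\overline{\mathbf{a}}_i\neq 0$. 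When $R$ is not local, a nonzero $\mathbf{a}_i$ can become zero in $R_m^n$ (each coordinate annihilated by an element outside $m$); then $(H_{\mathbf{a}_i})_m=R_m^n$, the image of $H_{\mathbf{a}_i}$ in $k^n$ is all of $k^n$, and that index contributes no constraint to the cover. Localization does not preserve properness of submodules, so the degenerate case is not resolved.

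A concrete failure: take $R=\mathbb{Z}_6$, $n=2$, $m=(3)$, so $k=R/m\simeq\mathbb{Z}_3$, and let $\mathbf{a}=(3,3)\in m^2$. Then $H_{\mathbf{a}}=\{(x,y):3(x+y)=0\}=\{(x,y):x+y\in(2)\}$ is a proper submodule of $R^2$ of index $2$, yet its image in $k^2$ is all of $(\mathbb{Z}_3)^2$; and since $2\cdot 3=0$ with $2\notin m$, the element $3$ dies in $R_m\simeq\mathbb{Z}_3$, so $(H_{\mathbf{a}})_m=R_m^2$ and Nakayama yields nothing. Such vertices can certainly belong to a dominating set --- $(3,3)$ has $16$ neighbours among the $35$ vertices of $TD(\mathbb{Z}_6,2)$ --- so the covering of $k^n$ you derive may consist entirely of copies of $k^n$ and gives no lower bound on $d$. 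The statement itself appears to be correct (a short case analysis shows $\gamma(TD(\mathbb{Z}_6,2))\ge 3$), but closing the gap requires a different treatment of the vertices that vanish in $R_m$; for finite $R$ one natural route is to decompose $R$ into local factors and run the counting argument inside the factor whose residue field is $k$, rather than reducing $R^n$ itself modulo $m$.
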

It would be desirable to show that for an arbitrary infinite ring, the domination number is infinite. 
\section{Independence and clique number}
Our aim in this section is to investigate the clique and independence number of $TD(R,n)$.\\
By Theorem \ref{n2}, the next theorem about the clique and independence number for $n=2$ follows immediately.
\begin{thm}\label{cliquen2}
Let $\mathbb{F}$ be a finite field. Then 
\begin{equation}\omega(TD(\mathbb{F},2))=\left\{
\begin{array}{cc}
|\mathbb{F}|-1 &~\rm{if}~|\mathbb{F}|\equiv 3~(\bmod~ 4),\\
2 & \textrm{\rm otherwise}.\
\end{array} \right.
\end{equation}
Also, for the independence number we have the following:
\begin{equation}\alpha(TD(\mathbb{F},2))=\left\{
\begin{array}{cc}
\dfrac{O(2,\mathbb{F})}{|\mathbb{F}|-1}+\dfrac{(|\mathbb{F}|^2-1)-O(2,\mathbb{F})}{2} &~\rm{if}~ O(\mathbb{F},2)\neq 0,\\
\dfrac{|\mathbb{F}|^2-1}{2} & \rm{otherwise}.\
\end{array} \right.
\end{equation}
\end{thm}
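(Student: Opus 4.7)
The plan is to apply Theorem \ref{n2} directly, since that result already identifies $TD(\mathbb{F},2)$ as a disjoint union of complete and complete bipartite graphs with explicit parameters. Once we have such a decomposition, the clique and independence numbers of the whole graph can be read off by combining the invariants of the components, so essentially the statement is a bookkeeping consequence of Theorem \ref{n2}.

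First I would recall two elementary facts: for a disjoint union of graphs, the clique number is the \emph{maximum} of the components' clique numbers, while the independence number is the \emph{sum} of the components' independence numbers. Next I would list the invariants of the building blocks appearing in Theorem \ref{n2}: $\omega(K_{|\mathbb{F}|-1}) = |\mathbb{F}|-1$ and $\alpha(K_{|\mathbb{F}|-1}) = 1$, while $\omega(K_{|\mathbb{F}|-1,|\mathbb{F}|-1}) = 2$ (for $|\mathbb{F}| \geq 2$) and $\alpha(K_{|\mathbb{F}|-1,|\mathbb{F}|-1}) = |\mathbb{F}|-1$.

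For the clique number, the case distinction in Theorem \ref{n2} is precisely whether any complete component of the form $K_{|\mathbb{F}|-1}$ appears, which is controlled by whether $O(\mathbb{F},2) = 0$; maximising the component clique numbers immediately yields the case split in the theorem. For the independence number in the case $O(\mathbb{F},2) = 0$, one multiplies the number of components, $\frac{|\mathbb{F}|+1}{2}$, by $|\mathbb{F}|-1$, giving $\frac{|\mathbb{F}|^2-1}{2}$. In the remaining case the contributions from the two component types add up to
\[
\alpha(TD(\mathbb{F},2)) \;=\; \frac{O(2,\mathbb{F})}{|\mathbb{F}|-1} \cdot 1 \;+\; \frac{|\mathbb{F}|^2-1-O(2,\mathbb{F})}{2(|\mathbb{F}|-1)} \cdot (|\mathbb{F}|-1),
\]
which simplifies to the stated expression.

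The only real ``obstacle'' is careful accounting, and the degenerate case $|\mathbb{F}|=2$ deserves a sanity check: there $K_{|\mathbb{F}|-1}=K_{1}$ and $K_{|\mathbb{F}|-1,|\mathbb{F}|-1}=K_{1,1}$, and one should verify that the formulas still evaluate correctly against the three-vertex graph of Figure \ref{fig:sample1} restricted to $n=2$. Apart from this verification, the entire proof is a one-line consequence of Theorem \ref{n2}.
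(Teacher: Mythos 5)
Your approach is exactly the paper's: the paper offers no argument beyond the remark that the theorem ``follows immediately'' from Theorem \ref{n2}, and your component-by-component bookkeeping (clique number $=$ maximum over components, independence number $=$ sum over components) is the right way to make that remark precise. Your independence-number computation is correct in both cases, and the $|\mathbb{F}|=2$ sanity check is a sensible addition.

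However, there is a real problem with your treatment of the clique number. You assert that maximising the component clique numbers ``immediately yields the case split in the theorem,'' but it does not: carrying the computation out gives the \emph{transpose} of the displayed statement. Indeed, by the formula for $O(\mathbb{F},2)$ earlier in the paper, $O(\mathbb{F},2)=0$ precisely when $|\mathbb{F}|\equiv 3\pmod 4$; in that case every component is a $K_{|\mathbb{F}|-1,|\mathbb{F}|-1}$ and hence $\omega(TD(\mathbb{F},2))=2$, whereas when $O(\mathbb{F},2)\neq 0$ (i.e.\ $|\mathbb{F}|\equiv 1\pmod 4$ or $\operatorname{char}\mathbb{F}=2$) a complete component $K_{|\mathbb{F}|-1}$ appears and $\omega(TD(\mathbb{F},2))=\max\{|\mathbb{F}|-1,2\}=|\mathbb{F}|-1$ for $|\mathbb{F}|\geq 3$. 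For instance, $TD(\mathbb{F}_7,2)$ is a disjoint union of copies of $K_{6,6}$, so its clique number is $2$, not $6$ as the printed statement claims; conversely $TD(\mathbb{F}_5,2)$ contains the clique $\{(r,2r):r\neq 0\}$ of size $4$. The two assignments coincide only for $|\mathbb{F}|\in\{2,3\}$. So either the theorem as printed has its two clique-number cases swapped and you should say so, or you must not claim your (correct) computation reproduces it; as written, your proof silently endorses a conclusion that your own method contradicts.
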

The set $\{e_1,\ldots,e_n\}$ is a clique in $TD(R,n)$. Then $\omega(TD(R,n))\geq  n$. In the next theorem, we prove that under some conditions we have equality.
\begin{thm}
Let $R$ be an integral domain such that $O(R,n)=0$. Then $\omega(TD(R,n))= n$.
\end{thm}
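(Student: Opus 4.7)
The plan is to establish the upper bound $\omega(TD(R,n)) \le n$, since the matching lower bound $\omega(TD(R,n)) \ge n$ is already recorded in the sentence preceding the theorem via the clique $\{e_1,\ldots,e_n\}$. The approach will convert the orthogonality condition defining the clique into a linear independence statement over the fraction field of $R$, and then read off the bound from the dimension of the ambient vector space.

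Let $\{\mathbf{v}_1,\ldots,\mathbf{v}_k\}$ be an arbitrary clique in $TD(R,n)$. By the definition of adjacency, $\mathbf{v}_i\cdot\mathbf{v}_j=0$ for all $i\neq j$. Moreover, each $\mathbf{v}_i$ is a nonzero element of $R^n$, and the hypothesis $O(R,n)=0$ forces $\|\mathbf{v}_i\|=\mathbf{v}_i\cdot\mathbf{v}_i\neq 0$ in $R$; otherwise $\mathbf{v}_i$ would be a nontrivial solution of $x_1^2+\cdots+x_n^2=0$, contradicting $O(R,n)=0$. I would then pass to the fraction field $K=\operatorname{Frac}(R)$, which is available because $R$ is an integral domain, and view $\mathbf{v}_1,\ldots,\mathbf{v}_k$ as vectors in the $n$-dimensional $K$-vector space $K^n$.

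Suppose $c_1\mathbf{v}_1+\cdots+c_k\mathbf{v}_k=\mathbf{0}$ for some scalars $c_i\in K$. Dotting this relation with $\mathbf{v}_j$ and using the orthogonality produced by the clique condition yields $c_j\|\mathbf{v}_j\|=0$ in $K$. Since $\|\mathbf{v}_j\|\neq 0$ in $R\subseteq K$ and $K$ is a field, $c_j=0$ for every $j$. Hence $\mathbf{v}_1,\ldots,\mathbf{v}_k$ are linearly independent in $K^n$, so $k\le n$, which combined with the lower bound gives $\omega(TD(R,n))=n$.

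No serious obstacle is anticipated; the argument is a ring-theoretic variant of the classical fact that pairwise orthogonal nonisotropic vectors are linearly independent. The only subtlety worth highlighting is that one cannot cancel $\|\mathbf{v}_j\|$ directly in $R$, since $R$ need not be a field; inverting it requires passing to $\operatorname{Frac}(R)$, and this is precisely the role played by the integral domain hypothesis in the statement.
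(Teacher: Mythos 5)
Your proposal is correct and follows essentially the same route as the paper: both show that the clique vectors are pairwise orthogonal and non-isotropic (using $O(R,n)=0$), deduce linear independence by dotting a linear relation with each $\mathbf{v}_j$, and conclude $k\le n$ from the rank of the ambient module. The only cosmetic difference is that you pass to the fraction field $K=\operatorname{Frac}(R)$ to cancel $\|\mathbf{v}_j\|$, whereas the paper cancels directly in the integral domain $R$ and invokes the freeness of $R^n$ as an $R$-module.
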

\begin{proof}
Let $W=\{a_1,\ldots,a_t\}$ be a clique in $TD(R,n)$. We show that $a_{1},\ldots,a_t$ should be linearly independent over $R$. Let $$\alpha_{1}a_1+\cdots+\alpha_{t}a_t=0.$$ Therefore, by multiplying to $a_i$ for $i=1,\ldots,t$, we have
$$\alpha_i ||a_i||=0.$$
Then $\alpha_i=0$. Since $R^{n}$ is a free $R$-module, $t\leq n$.
\end{proof}
If we drop the condition $O(R,n)=0$, above theorem is no longer hold. The next two theorems show that if $O(R,n)\neq 0$, then the clique number is exponentially large.
\begin{thm}
Let $\mathbb{F}$ be a field such that $O(\mathbb{F},2)\neq 0$. Then 
\begin{equation} \label{cliquebound}
\omega(TD(\mathbb{F},n))\geq |\mathbb{F}|^{[\frac{n}{2}]}-1.
\end{equation}
If $n$ is an odd number, then $\omega(TD(\mathbb{F},n))\geq |\mathbb{F}|^{[\frac{n}{2}]}$.
\end{thm}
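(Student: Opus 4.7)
The plan is to extract a square root of $-1$ in $\mathbb{F}$ from the hypothesis and use it to build an explicit totally isotropic family of vectors. Since $O(\mathbb{F},2)\neq 0$, there exist $a,b\in\mathbb{F}$, not both zero, with $a^2+b^2=0$. In a field one of $a,b$ being zero forces the other to be zero, so $b\neq 0$, and $i:=a/b$ satisfies $i^2=-1$. All the subsequent work keys off this element $i$.

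Let $k=\lfloor n/2\rfloor$. For each $\mathbf{x}=(x_1,\ldots,x_k)\in\mathbb{F}^k\setminus\{\mathbf{0}\}$, define
\[
v_\mathbf{x}\;=\;(x_1,\,ix_1,\,x_2,\,ix_2,\,\ldots,\,x_k,\,ix_k,\,0,\ldots,0)\in\mathbb{F}^n,
\]
where the trailing $0$ is absent if $n$ is even and present once if $n$ is odd. The first step is the easy check that the map $\mathbf{x}\mapsto v_\mathbf{x}$ is injective (read off $x_j$ from coordinate $2j-1$), so the set $S=\{v_\mathbf{x}:\mathbf{x}\neq \mathbf{0}\}$ has exactly $|\mathbb{F}|^k-1$ elements. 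The key computation is the dot product: for any $\mathbf{x},\mathbf{y}\in\mathbb{F}^k$,
\[
v_\mathbf{x}\cdot v_\mathbf{y}\;=\;\sum_{j=1}^{k}\bigl(x_jy_j+(ix_j)(iy_j)\bigr)\;=\;(1+i^2)\sum_{j=1}^k x_jy_j\;=\;0.
\]
Thus any two distinct members of $S$ are orthogonal and hence adjacent in $TD(\mathbb{F},n)$, which shows $S$ is a clique and yields $\omega(TD(\mathbb{F},n))\geq |\mathbb{F}|^k-1$.

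For the sharper bound when $n=2k+1$ is odd, I will append the standard basis vector $e_n=(0,\ldots,0,1)$ to $S$. Every $v_\mathbf{x}\in S$ has last coordinate $0$, so $e_n\cdot v_\mathbf{x}=0$; since $e_n\notin S$ (its first $n-1$ coordinates vanish), the enlarged set $S\cup\{e_n\}$ remains a clique and has cardinality $|\mathbb{F}|^k$, giving $\omega(TD(\mathbb{F},n))\geq |\mathbb{F}|^{\lfloor n/2\rfloor}$.

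The only subtlety I anticipate is the bookkeeping: making sure $v_\mathbf{x}\neq v_\mathbf{y}$ for $\mathbf{x}\neq \mathbf{y}$ (handled by the injection above) and confirming that the adjacency condition in $TD(\mathbb{F},n)$ is just orthogonality between distinct nonzero vectors (which it is, by definition). Neither step is really an obstacle; the content of the theorem is entirely concentrated in the existence of $i$ with $i^2=-1$, which is exactly what the hypothesis $O(\mathbb{F},2)\neq 0$ delivers.
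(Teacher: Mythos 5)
Your proof is correct and follows essentially the same route as the paper: both construct a totally isotropic subspace of dimension $\lfloor n/2\rfloor$ from a nontrivial solution of $x^2+y^2=0$ (the paper spans the vectors $ae_1+be_2+\cdots+ae_{2i-1}+be_{2i}$, which is the same subspace as your $\{(x_1,ix_1,\ldots,x_k,ix_k,0)\}$ up to a sign in the even coordinates), and both adjoin $e_n$ in the odd case. Your explicit parametrization and the observation that $b\neq 0$ forces $i^2=-1$ are just a slightly more concrete rendering of the same argument.
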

\begin{proof}
Let $a^2+b^2=0$ and $(a,b)\neq (0,0)$. Let $\mathbf{a}_i=ae_1+be_2+\cdots+ae_{2i-1}+be_{2i}$, for $i=1,\ldots,[\frac{n}{2}]$. Let $W$ be the vector subspace generated by $\{ \mathbf{a}_i\mid i=1,\ldots,[\frac{n}{2}] \}$. Then $W-\{ \mathbf{0} \}$ is a clique in $TD(\mathbb{F},n)$ of size $|\mathbb{F}|^{[\frac{n}{2}]}-1$.

If $n$ is odd, then $W \cup \{ e_{n} \}$ is a clique set. Hence, $\omega(TD(\mathbb{F},n))\geq |\mathbb{F}|^{[\frac{n}{2}]}$.
\end{proof}
By Theorem \ref{cliquen2}, the inequality (\ref{cliquebound}) can turn into equality for $n=2$.
\begin{thm}
Let $R$ be a ring such that $O(R,2)\neq 0$. Then 
\begin{equation} \label{cliqueboundring}
\omega(TD(R,n))\geq 2^{[\frac{n}{2}]}-1.
\end{equation}
If $n$ is an odd number, then $\omega(TD(R,n))\geq 2^{[\frac{n}{2}]}$.
\end{thm}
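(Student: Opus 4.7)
The plan is to adapt the construction from the previous theorem, but since $R$ is only a ring (no division available), to replace the vector subspace of $\mathbb{F}$-linear combinations by the set of all $\{0,1\}$-combinations of the basic orthogonal vectors. This will lose the factor of $|\mathbb{F}|$ per block but still give an exponential lower bound $2^{\lfloor n/2 \rfloor}$.

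More concretely: the hypothesis $O(R,2)\neq 0$ gives a pair $(a,b)\in R^2\setminus\{(0,0)\}$ with $a^2+b^2=0$. For each $i=1,\ldots,\lfloor n/2 \rfloor$ I would set
\begin{equation*}
\mathbf{a}_i \;=\; a\,e_{2i-1}+b\,e_{2i}.
\end{equation*}
A direct calculation shows $\mathbf{a}_i\cdot\mathbf{a}_i=a^2+b^2=0$ and $\mathbf{a}_i\cdot\mathbf{a}_j=0$ for $i\neq j$ (disjoint supports). Then for each nonempty subset $S\subseteq\{1,\ldots,\lfloor n/2\rfloor\}$ I define
\begin{equation*}
\mathbf{v}_S \;=\; \sum_{i\in S}\mathbf{a}_i,
\end{equation*}
and let $W=\{\mathbf{v}_S : \emptyset\neq S\subseteq\{1,\ldots,\lfloor n/2\rfloor\}\}$.

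The two things to verify are (i) the $\mathbf{v}_S$ are pairwise distinct and (ii) they are pairwise orthogonal. For (i), if $S\neq T$ pick $i\in S\triangle T$, say $i\in S\setminus T$; in coordinates $2i-1,2i$ the vector $\mathbf{v}_S$ reads $(a,b)$ while $\mathbf{v}_T$ reads $(0,0)$, and $(a,b)\neq(0,0)$ by hypothesis. For (ii), bilinearity of the dot product gives $\mathbf{v}_S\cdot\mathbf{v}_T=\sum_{i\in S,j\in T}\mathbf{a}_i\cdot\mathbf{a}_j=0$. Therefore $W$ is a clique in $TD(R,n)$ of size $2^{\lfloor n/2\rfloor}-1$, which proves \eqref{cliqueboundring}.

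For the odd case I would adjoin $e_n$ to $W$. Since $\mathbf{a}_i$ is supported on coordinates $\le n-1$, we have $e_n\cdot\mathbf{v}_S=0$ for every $S$, and $e_n\neq\mathbf{v}_S$ because the $n$-th coordinate of $e_n$ is $1$ while that of $\mathbf{v}_S$ is $0$. Hence $W\cup\{e_n\}$ is a clique of size $2^{\lfloor n/2\rfloor}$, as required. There is no serious obstacle here; the only thing to be careful about is that without $R$ being a field we cannot use a whole $R$-submodule (scalar multiplication by a zero divisor could collapse vectors), which is why I restrict to $\{0,1\}$-combinations, and distinctness then follows from $(a,b)\neq(0,0)$ alone.
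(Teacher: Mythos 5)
Your proof is correct and takes essentially the same approach as the paper: use the isotropic pair $(a,b)$ with $a^2+b^2=0$ placed in pairs of coordinates, take all nonzero $\{0,1\}$-combinations to get a clique of size $2^{[\frac{n}{2}]}-1$, and adjoin $e_n$ in the odd case. The only cosmetic difference is that the paper's generators $\mathbf{a}_i=ae_1+be_2+\cdots+ae_{2i-1}+be_{2i}$ are cumulative sums of your single-block vectors $ae_{2i-1}+be_{2i}$; your choice makes the distinctness verification (which the paper leaves implicit) cleaner.
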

\begin{proof}
Let $a^2+b^2=0$ and $(a,b)\neq (0,0)$. Let $\mathbf{a}_i=ae_1+be_2+\cdots+ae_{2i-1}+be_{2i}$, for $i=1,\ldots,[\frac{n}{2}]$. Let $$W=\{\sum_{i=1}^{[\frac{n}{2}]} \varepsilon_{i}\mathbf{a}_i \mid \varepsilon_{i} \in \{0,1\} \}.$$ Then $W-\{ \mathbf{0} \}$ is a clique in $TD(R,n)$ of size $2^{[\frac{n}{2}]}-1$.

If $n$ is odd, then $W \cup \{ e_{n} \}$ is a clique set. Hence, $\omega(TD(R,n))\geq 2^{[\frac{n}{2}]}$.
\end{proof}
\begin{rmk}
By Proposition 6.1 of \cite{akhtar}, one can easily find a better lower bound for the inequality (\ref{cliqueboundring}). 
\end{rmk}
\begin{rmk}
Let $W=\{\mathbf{a}_{1},\ldots,\mathbf{a}_{t} \}$ be a clique set of $TD(R,n)$. Then the set $\Delta=\{(\mathbf{a}_{i},1)\mid i=1,\ldots, t\} \cup \{e_{n+1}\}$ is an independent set for $TD(R,n+1)$. Hence, $$\omega(TD(R,n))+1\leq \alpha(TD(R,n+1)).$$
Let $W=\{\mathbf{a}_{1},\ldots,\mathbf{a}_{t} \}$ be a clique set of $TD(R,n)$ such that $||\mathbf{a}_{i}||=0$ for $i=1,\ldots, t$. Then the set $\Delta=\{(\mathbf{a}_{i},\beta)\mid i=1,\ldots, t~and~\beta \in R^{\ast}\}\cup \{\beta e_{n+1}\mid \beta \in R^{\ast}\}$ is an independent set for $TD(R,n+1)$. Hence, $$(|R|-1) (\omega(TD(R,n))+1)\leq \alpha(TD(R,n+1)).$$
\end{rmk}
The following proposition about tensor product of graphs is straightforward.
\begin{prop}
Let $G$ and $H$ be simple graphs. Then $\omega(G\otimes H) = \min \{ \omega(G),\omega(H)\}$.
\end{prop}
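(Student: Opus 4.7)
The plan is to prove the two inequalities separately. For the upper bound $\omega(G\otimes H)\le\min\{\omega(G),\omega(H)\}$, I would take a clique $K\subseteq V(G\otimes H)$ and look at its projections $\pi_G(K)\subseteq V(G)$ and $\pi_H(K)\subseteq V(H)$. The key observation is that in a simple (loopless) graph, two distinct clique vertices $(u,v)$ and $(u',v')$ of $G\otimes H$ cannot share a coordinate: if, say, $u=u'$, then adjacency in $G\otimes H$ would force $u\sim u$ in $G$, contradicting the absence of loops. Thus both projections are injective on $K$, and since adjacency in $G\otimes H$ forces adjacency in each coordinate, $\pi_G(K)$ is a clique in $G$ of size $|K|$ (and similarly for $H$). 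Hence $|K|\le\omega(G)$ and $|K|\le\omega(H)$.

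For the lower bound, let $k=\min\{\omega(G),\omega(H)\}$, and pick cliques $\{u_1,\dots,u_k\}\subseteq V(G)$ and $\{v_1,\dots,v_k\}\subseteq V(H)$. I would then exhibit the set $C=\{(u_1,v_1),(u_2,v_2),\dots,(u_k,v_k)\}$ as a clique in $G\otimes H$: for any $i\ne j$ the vertices $(u_i,v_i)$ and $(u_j,v_j)$ differ and satisfy $u_i\sim u_j$ in $G$ and $v_i\sim v_j$ in $H$, which is exactly the adjacency condition in the tensor product.

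Combining these two bounds yields $\omega(G\otimes H)=\min\{\omega(G),\omega(H)\}$. There is essentially no obstacle here; the only subtlety worth flagging is the no-loops convention used in the upper-bound argument, which is what the word \emph{simple} guarantees. If loops were allowed (as in $\overline{TD}(R,n)$), a vertex $u$ with a loop in $G$ could be paired with an entire clique of $H$ to produce a larger clique, and the identity would fail.
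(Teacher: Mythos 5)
Your proof is correct, and since the paper states this proposition without proof (calling it straightforward), your argument --- projections of a clique are injective and clique-preserving for the upper bound, a diagonal clique for the lower bound --- is exactly the standard argument being left implicit. Your closing remark about loops is also apt, as it explains why the paper introduces the separate quantity $\overline{\omega}$ for graphs with loops immediately afterward.
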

\begin{dfn}
A clique-loop is a set of pairwise adjacent vertices in a graph, and loop at each vertex. Let us denote by $\overline{\omega}(G)$ the size of the largest clique-loop of $G$.
\end{dfn}
The following proposition can be proved easily.
\begin{prop}
Let $G$ and $H$ be graphs. Then $\omega(G\otimes H)\geq \overline{\omega}(G\otimes H) =  \overline{\omega}(G)\overline{\omega}(H)$
\end{prop}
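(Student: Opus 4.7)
The plan is to split the statement into two parts: the easy inequality $\omega(G\otimes H)\geq \overline{\omega}(G\otimes H)$, and the substantive equality $\overline{\omega}(G\otimes H)=\overline{\omega}(G)\overline{\omega}(H)$. For the inequality, I would observe that by definition a clique-loop is a clique in which every vertex additionally carries a loop; forgetting the loop data shows that every clique-loop is, in particular, a clique. Applying this to $G\otimes H$ gives $\omega(G\otimes H)\geq \overline{\omega}(G\otimes H)$ immediately.

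For the equality, I would prove both directions separately. For the $\geq$ direction, pick maximum clique-loops $K_G\subseteq V(G)$ and $K_H\subseteq V(H)$, and show that $K_G\times K_H\subseteq V(G\otimes H)$ is a clique-loop. The loop condition at $(u,v)\in K_G\times K_H$ reduces via the tensor product definition to the requirement that $u$ and $v$ each have loops, which is exactly what being in $K_G$ and $K_H$ supplies. For pairwise adjacency of distinct $(u,v)\neq(u',v')$, I would handle three cases: if $u=u'$, then $v\neq v'$, and I invoke the loop at $u$ together with the edge $vv'$ in $K_H$; the case $v=v'$ is symmetric; and if both coordinates differ, both coordinate edges come from the clique properties of $K_G$ and $K_H$. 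This produces a clique-loop in $G\otimes H$ of size $\overline{\omega}(G)\overline{\omega}(H)$.

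For the $\leq$ direction, let $K$ be a maximum clique-loop in $G\otimes H$, and set $K_G=\pi_G(K)$ and $K_H=\pi_H(K)$, where $\pi_G,\pi_H$ are the coordinate projections. A short verification shows that $K_G$ and $K_H$ are clique-loops in $G$ and $H$ respectively: the loop at any $u\in K_G$ is inherited from the loop at a preimage $(u,v)\in K$, and adjacency between distinct $u,u'\in K_G$ comes from adjacency of preimages $(u,v),(u',v')\in K$, which are automatically distinct since $u\neq u'$. As $K\subseteq K_G\times K_H$, I conclude $|K|\leq |K_G||K_H|\leq \overline{\omega}(G)\overline{\omega}(H)$. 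The only point needing care is the coordinate-coincidence case in the $\geq$ direction, where the loop hypothesis built into the clique-loop notion is exactly what makes the tensor-product adjacency go through; this is precisely what distinguishes $\overline{\omega}$ from $\omega$ and what allows the multiplicativity statement to hold without any further hypothesis on $G$ or $H$.
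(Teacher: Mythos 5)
Your proof is correct. The paper offers no argument of its own for this proposition (it is merely asserted to be ``easily proved''), so there is nothing to compare against; your two-step argument --- forgetting loops to get $\omega \geq \overline{\omega}$, then proving multiplicativity of $\overline{\omega}$ by showing $K_G\times K_H$ is a clique-loop for one direction and projecting a maximum clique-loop for the other --- is the natural one, and you correctly isolate the key point that the loops are exactly what handle the coordinate-coincidence case $u=u'$ in the tensor-product adjacency.
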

It is easy to check that $\omega(TD(R,n))=\omega(\overline{TD(R,n)})-1$.
However, it seems difficult to find the clique number of $TD(R,n)$, for an arbitrary ring $R$ and integer $n$.
\begin{thm}
Let $\mathbb{F}$ be a field such that $O(\mathbb{F},2)\neq 0$. Then $\overline{\omega}(\overline{TD(\mathbb{F},n)})=|\mathbb{F}|^{[\dfrac{n}{2}]}$.
\end{thm}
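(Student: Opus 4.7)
The plan is to prove the equality by a matching pair of bounds. For the lower bound I reuse the construction of the preceding clique theorem: choose $a,b\in\mathbb{F}$ with $a^{2}+b^{2}=0$ and $(a,b)\neq(0,0)$, take $\mathbf{a}_{i}=ae_{1}+be_{2}+\cdots+ae_{2i-1}+be_{2i}$ for $i=1,\ldots,[\frac{n}{2}]$, and let $W$ be their $\mathbb{F}$-span. A one-line bilinearity computation shows $\mathbf{v}\cdot\mathbf{w}=0$ for every ordered pair $\mathbf{v},\mathbf{w}\in W$, including self-pairings, so in $\overline{TD}(\mathbb{F},n)$ every element of $W$ carries a loop and every two of them are adjacent. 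Since $W$ has dimension $[\frac{n}{2}]$ and $\mathbf{0}\in W$ is a legitimate vertex of $\overline{TD}(\mathbb{F},n)$, this supplies a clique-loop of size $|\mathbb{F}|^{[\frac{n}{2}]}$.

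For the upper bound, let $W$ be any clique-loop in $\overline{TD}(\mathbb{F},n)$. The loop condition forces $\mathbf{x}\cdot\mathbf{x}=0$ for each $\mathbf{x}\in W$, while the clique condition forces $\mathbf{x}\cdot\mathbf{y}=0$ for distinct $\mathbf{x},\mathbf{y}\in W$; together they say that the standard symmetric bilinear form vanishes on every ordered pair from $W$. By bilinearity the subspace $U:=\operatorname{span}_{\mathbb{F}}(W)$ is then also totally isotropic, so $U\subseteq U^{\perp}$. Because the dot product on $\mathbb{F}^{n}$ has identity Gram matrix (hence is non-degenerate), $\dim U+\dim U^{\perp}=n$, and combining this with $U\subseteq U^{\perp}$ forces $\dim U\leq[\frac{n}{2}]$. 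Hence $|W|\leq|U|\leq|\mathbb{F}|^{[\frac{n}{2}]}$, meeting the lower bound.

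Most steps are routine bilinear algebra, but one subtlety is worth flagging. In characteristic $2$ the associated quadratic form $\mathbf{x}\cdot\mathbf{x}=(x_{1}+\cdots+x_{n})^{2}$ is the square of a linear form, so it is highly degenerate \emph{as a quadratic form}, and one might worry that the Witt-index bound $[\frac{n}{2}]$ breaks down. What the argument actually uses, however, is non-degeneracy of the \emph{bilinear} form $B(\mathbf{x},\mathbf{y})=\sum x_{i}y_{i}$, whose Gram matrix is the identity in every characteristic; the identity $\dim U+\dim U^{\perp}=n$ then applies and the bound $\dim U\leq n/2$ survives intact. This is the only step that requires care; everything else follows directly from bilinearity and the clique construction already in hand.
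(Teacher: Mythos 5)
Your proof is correct and follows essentially the same route as the paper: both reduce the upper bound to the fact that a clique-loop generates a totally isotropic subspace of the non-degenerate dot-product form, hence of dimension at most $[\frac{n}{2}]$, and both get the lower bound from the span of the vectors $\mathbf{a}_i=ae_1+be_2+\cdots+ae_{2i-1}+be_{2i}$. The only (harmless) difference is that you pass to the span of an arbitrary clique-loop, whereas the paper argues via maximality that the largest clique-loop is itself a subspace; your remark about characteristic $2$ is a useful clarification the paper omits.
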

\begin{proof}
Let $\Delta$ be a clique-loop of maximum size. We first prove that $\Delta$ is a vector subspace. Since $\Delta$ is maximum, then $\mathbf{0}\in \Delta$, and if $\mathbf{a},\mathbf{b}\in \Delta$, then $\mathbf{a}-\mathbf{b}\in \Delta$. Since for all $\mathbf{a},\mathbf{b}\in \Delta$, we have $\mathbf{a} \cdot \mathbf{b}= 0$, it follows that $\Delta\subseteq \Delta^{\bot}$. Therefore, $\dim_{\mathbb{F}}\Delta \leq [\dfrac{n}{2}]$ completes the proof.
\end{proof}
Now, we will show that the ring $R$ is finite if and only if $\alpha(TD(R,n))$ is finite.
\begin{thm}
Let $R$ be an infinite ring. Then $\alpha(TD(R,n))=\infty$.
\end{thm}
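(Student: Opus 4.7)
The plan is to exhibit an explicit infinite independent set in $TD(R,n)$ (for $n\geq 2$) supported on just the first two coordinates. For each $r\in R$, set $\mathbf{v}_r := (1,r,0,\ldots,0)\in R^n$; these are all distinct and nonzero, and for $r\neq s$ one has $\mathbf{v}_r\cdot\mathbf{v}_s = 1+rs$. So $\mathbf{v}_r$ and $\mathbf{v}_s$ are non-adjacent in $TD(R,n)$ if and only if $rs\neq -1$. The key observation is that the equation $rs=-1$ forces both $r$ and $s$ to be units, since in a commutative ring any multiple of a non-unit is a non-unit, and $-1$ is a unit.

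I would then construct the independent set by a straightforward greedy recursion. Set $r_0=0$; given $r_0,\ldots,r_{k-1}$, the elements of $R$ that must be avoided to keep the $r_i$'s distinct and to prevent any $r_ir_k = -1$ form the finite set
\[
F_k = \{r_0,\ldots,r_{k-1}\}\ \cup\ \{-r_i^{-1} : 0\leq i<k,\ r_i\in U(R)\},
\]
of size at most $2k$. Since $R$ is infinite, $R\setminus F_k\neq\emptyset$ at every step, so the sequence can be extended indefinitely. The resulting collection $\{\mathbf{v}_{r_k} : k\geq 0\}$ is an infinite independent set in $TD(R,n)$, giving $\alpha(TD(R,n))=\infty$.

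There is essentially no obstacle: the argument is uniform in $R$ and uses nothing beyond infiniteness. In fact, if $R\setminus U(R)$ happens to be infinite, one can avoid the recursion entirely and take $\{\mathbf{v}_r : r\in R\setminus U(R),\ r\neq 0\}$ as an independent set at a single stroke, since the product of two non-units is a non-unit and hence never equals $-1$. The recursion is only needed in the contrary case (which includes all infinite fields), where $U(R)$ must be infinite and the involution $u\mapsto -u^{-1}$ on $U(R)$ records the only obstruction to independence.
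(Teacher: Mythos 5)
Your proof is correct and follows essentially the same route as the paper: both use the vectors $e_1+re_2$, reduce adjacency to the relation $rs=-1$, and note that this relation can only hold between units. The only difference is how the infinite set of parameters avoiding $rs=-1$ is extracted --- the paper two-colors the involution $a\mapsto -a^{-1}$ on $U(R)$ and takes the larger class together with the non-units, while you select greedily; both steps are routine and valid.
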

\begin{proof}
Let $U(R)=R_{1}\cup R_{2}$ be a partition of invertible elements in such a way that, if $a\in R_{i}$, then $-a^{-1}\notin R_{i}-\{a\}$, for $i=1,2$.
Without restriction of generality, we can assume $|R_{1}|\geq |R_{2}|$.
Let $\mathfrak{R}:=(R-U(R))\cup R_1$. It means that if $x$ and $y$ are distinct elements of $\mathfrak{R}$, then $xy\neq -1$. We see at once that $|\mathfrak{R}|=\infty$. Let $\Delta=\{e_1+ae_{2}\mid a\in \mathfrak{R}\}$. Hence, $\Delta$ is an independent set with infinitely many elements.
\end{proof}
\section{Planarity}
In \cite{Planar}, the authors have classified all finite commutative rings $R$ such that $\Gamma(R)$ is planar. In this section, we classify all rings $R$ and $n$, such that $TD(R,n)$ is planar.

A remarkable characterization of the planar graphs was given by Kuratowski in 1930.
\begin{thm}
A finite graph is planar if and only if it does not contain a subgraph that is a subdivision of $K_{5}$ or $K_{3,3}$.
\end{thm}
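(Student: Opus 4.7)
The plan is to prove both implications separately, with the trivial direction dispatched first and the bulk of the work devoted to the converse.

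For the easy direction (if $G$ contains a subdivision of $K_5$ or $K_{3,3}$ then $G$ is nonplanar), I would first establish that $K_5$ and $K_{3,3}$ themselves are nonplanar. This follows from Euler's formula $v - e + f = 2$ for a connected planar graph: for $K_5$ one combines $v=5$, $e=10$ with the face-count bound $2e \geq 3f$ (every face bounded by at least three edges) to force a contradiction, and for $K_{3,3}$ one uses $2e \geq 4f$ instead, since the bipartite structure forbids triangular faces. Since planarity is clearly preserved under taking subgraphs and under subdivision (inserting degree-two vertices along edges does not affect embeddability), the presence of a subdivision of $K_5$ or $K_{3,3}$ rules out planarity.

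The hard direction is the actual content of Kuratowski's theorem. I would argue by induction on the number of edges: assume the statement fails and let $G$ be a counterexample with the fewest edges, so $G$ is nonplanar but contains no Kuratowski subdivision. The first reduction is to show $G$ is $3$-connected. If $G$ had a cut-vertex or a $2$-vertex separator, one would split $G$ into smaller pieces, apply the inductive hypothesis to each (each is planar), and then glue the planar embeddings along the separator using the fact that any planar embedding of a $2$-connected graph can be chosen so that a prescribed edge lies on the outer face; this would yield a planar embedding of $G$, contradicting minimality.

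With $3$-connectedness in hand, I would invoke a structural argument in the spirit of Tutte's approach: pick a cycle $C$ in $G$ that is nonseparating (such cycles exist in $3$-connected graphs and bound a face in any planar embedding). The remaining edges decompose into \emph{bridges} of $C$, and one considers the \emph{conflict graph} whose vertices are these bridges, with two bridges adjacent when their attachments to $C$ interlace. If this conflict graph is bipartite, one can distribute the bridges inside and outside $C$ to produce a planar embedding, again contradicting nonplanarity. Otherwise, the conflict graph contains an odd cycle; tracing the interlacing pattern along $C$ and the internal paths of the bridges involved, one then extracts either three pairwise interlacing bridges (which forces a $K_{3,3}$-subdivision) or a configuration of three bridges sharing a common attachment structure (which forces a $K_5$-subdivision). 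The main obstacle is precisely this last step: turning the combinatorial impossibility of $2$-coloring the conflict graph into an explicit topological minor, which requires a careful case analysis of the attachment sets of the conflicting bridges on $C$. Since this is the classical Kuratowski theorem, I would in practice cite a standard reference such as \cite{west} rather than reproduce the full argument.
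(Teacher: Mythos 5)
The paper offers no proof of this statement: it is Kuratowski's classical 1930 theorem, quoted as a known tool and used only afterwards to certify non-planarity by exhibiting explicit $K_{3,3}$ subdivisions. So there is no in-paper argument to compare yours against step by step. That said, your outline is a faithful sketch of one standard proof. The easy direction is essentially complete as you state it: the Euler-formula counts ($2e \geq 3f$ for $K_{5}$, $2e \geq 4f$ for $K_{3,3}$ using bipartiteness) are correct, and planarity is indeed inherited by subgraphs and unaffected by subdividing edges. For the converse, your route --- take an edge-minimal counterexample, reduce to the $3$-connected case by splitting along a separator of size at most two and regluing embeddings with a prescribed edge on the outer face, then analyze the bridges of a nonseparating cycle via the conflict (overlap) graph --- is the classical Bondy--Murty/Tutte approach. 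The one genuinely hard step, which you correctly flag, is converting an odd cycle of pairwise overlapping bridges into an explicit $K_{5}$- or $K_{3,3}$-subdivision; this requires the case split between skew bridges and equivalent $3$-bridges, and as written your proposal asserts rather than performs it. Since you defer that step to a standard reference such as \cite{west}, your treatment lands in the same place as the paper, which simply cites the theorem as a black box; for the purposes of this paper that is entirely appropriate, but be aware that the deferred step is where all the real work of Kuratowski's theorem lives.
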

The next theorem classifies all planar graphs $TD(R,n)$.
\begin{thm}
Let $R$ be a commutative ring and $n$ be a natural number. Then $G=TD(R,n)$ is planar if and only if $G=TD(\mathbb{Z}_2,2)$, $G=TD(\mathbb{Z}_2,3)$ or $G=TD(\mathbb{Z}_3,2)$.
\end{thm}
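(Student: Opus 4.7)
The strategy is to prove both implications by a direct case analysis on $n$ and on the structure of $R$. For the easier direction, I verify that each of the three listed graphs is planar: Theorem~\ref{n2} realises $TD(\mathbb{Z}_2,2)$ as $K_{1,1}\sqcup K_1$ and $TD(\mathbb{Z}_3,2)$ as $2\,K_{2,2}$, both of which are obviously planar, while $TD(\mathbb{Z}_2,3)$ is displayed planarly in Figure~\ref{fig:sample1}. The real work is in the other direction, where I must produce a $K_5$ or a $K_{3,3}$ subgraph in $TD(R,n)$ for every remaining pair $(R,n)$.

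I would organise the non-planarity argument by the value of $n$. If $n\geq 5$ then $\{e_1,\dots,e_5\}$ is pairwise orthogonal and already gives a $K_5$. If $n=4$, an edge count suffices: even for $R=\mathbb{Z}_2$ Corollary~\ref{field} yields $8$ vertices of degree $7$ and $7$ vertices of degree $6$, i.e.\ $49$ edges on $15$ vertices, which violates the planarity bound $3v-6=39$, and the gap only grows for larger $R$. If $n=3$ and $|R|\geq 3$ a similar count succeeds: every vertex with an invertible coordinate has degree at least $|R|^{2}-2\geq 7$, and summing over those vertices already contributes more than $3(|R|^{3}-1)-6$ edges.

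The remaining case $n=2$ is the most delicate. If $R=\mathbb{F}$ is a field with $|\mathbb{F}|\geq 4$, Theorem~\ref{n2} exhibits a component isomorphic to $K_{q-1,q-1}$ with $q-1\geq 3$, which contains $K_{3,3}$. If $R$ is not a field I split according to whether $R$ is local. When $R$ has a nontrivial idempotent $e$ (equivalently, $R$ decomposes as a product of two nonzero rings), the six vectors
\[
A=\{(e,0),\,(0,e),\,(e,e)\},\qquad B=\{(1-e,0),\,(0,1-e),\,(1-e,1-e)\}
\]
form a $K_{3,3}$, since each of the nine dot products reduces to a multiple of $e(1-e)=0$. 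When $R$ is a local non-field with maximal ideal $M$, the nilpotence of $M$ produces a nonzero $m\in M$ with $m^{2}=0$, and $M\subseteq\operatorname{Ann}(m)$ gives $|\operatorname{Ann}(m)|\geq|M|$. If $|M|\geq 3$, one takes $A=\{(m,0),(0,m),(m,m)\}$ together with any three further nonzero elements of $\operatorname{Ann}(m)^{2}\setminus A$ as $B$; all nine pairs are orthogonal because every coordinate of a $B$-vector lies in $\operatorname{Ann}(m)$. The residual possibility $|M|=2$ forces $R\in\{\mathbb{Z}_4,\mathbb{F}_2[x]/(x^{2})\}$, for which $A=\{(1,m),(1+m,m),(m,0)\}$ and $B=\{(m,1),(m,1+m),(0,m)\}$ is checked directly to be a $K_{3,3}$, using that $u_1+u_2\in M$ whenever $u_1,u_2$ are units (since $R/M\cong\mathbb{F}_2$) and hence $m(u_1+u_2)\in M^{2}=0$.

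The main obstacle is precisely this last subcase: crude edge counting does not rule out planarity for small local non-field rings (for instance $TD(\mathbb{Z}_4,2)$ has only $27$ edges on $15$ vertices, comfortably under the bound $39$), so an explicit $K_{3,3}$ has to be produced by hand, and the construction is sensitive to both the nilpotency index of $M$ and the size of the residue field $R/M$.
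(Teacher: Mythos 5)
Your proof is essentially correct, but it takes a much longer road than the paper, and the detour is caused by missing one cheap observation. The paper's argument has only two non-trivial ingredients: (i) planarity forces $\omega\leq 4$, hence $n\leq 4$ since $\{e_1,\dots,e_n\}$ is a clique; and (ii) for $n\geq 2$ and \emph{any} ring with $|R|\geq 4$, picking three distinct nonzero elements $a,b,c$ makes $\{ae_1,be_1,ce_1\}\cup\{ae_2,be_2,ce_2\}$ a $K_{3,3}$, because vectors supported on different coordinates are always orthogonal. Step (ii) instantly reduces the whole problem to $R\in\{\mathbb{Z}_2,\mathbb{Z}_3\}$ with $n\leq 4$, after which only $TD(R,4)$ (an explicit $K_{3,3}$ on $\{e_1,e_2,e_1+e_2,e_3,e_4,e_3+e_4\}$) and $TD(\mathbb{Z}_3,3)$ (a $K_{3,3}$ subdivision/minor) need separate treatment. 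In particular, the entire structure-theoretic analysis you carry out for $n=2$ --- idempotents, local rings, the nilpotency index of $M$, the two rings with $|M|=2$ --- is unnecessary: what you call the main obstacle, $TD(\mathbb{Z}_4,2)$, already contains the $K_{3,3}$ on $\{(1,0),(2,0),(3,0)\}$ versus $\{(0,1),(0,2),(0,3)\}$. Your explicit $K_{3,3}$'s are all verified correctly, so nothing is wrong, but the difficulty you diagnose is an artifact of restricting attention to edge counts and to annihilator-type orthogonality rather than to disjointly supported vectors. What your route does buy is a cleaner disposal of $TD(\mathbb{Z}_3,3)$: your degree count ($26$ vertices of degree at least $7$ gives at least $91>3\cdot 26-6$ edges) is arguably tidier than the paper's somewhat informal ``merge adjacent vertices'' contraction argument.

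Two soft spots to tighten if you keep your version. First, the edge-counting claims for $n=3$ and $n=4$ over a general ring are asserted rather than proved: you need a lower bound on the number of vertices with an invertible coordinate (e.g.\ $|R|^n-(|R|-1)^n$, using $|R\setminus U(R)|\leq |R|-1$), and for $n=3$, $|R|=3$ that crude bound does \emph{not} beat $3(|R|^3-1)-6$; you must invoke that the only ring of order $3$ is the field $\mathbb{Z}_3$, for which all $26$ vertices qualify. Second, in the local case the inclusion $M\subseteq\operatorname{Ann}(m)$ requires choosing $m$ in the last nonzero power of $M$, not merely any $m$ with $m^2=0$; say so explicitly.
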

\begin{proof}
Let $TD(R,n)$ be a planar graph. Since $\omega(TD(R,n))\geq n$, we have $n\leq 4$.\\
Let $R$ be a ring with at least 4 elements. Let $a,b,c$ be three distinct non-zero elements of $R$. Therefore, the graph $K_{3,3}$ is a subgraph induced by $\{ae_{1},be_{1},ce_{1},ae_{2},be_{2},ce_{2}\}$. Then $R$ is either $\mathbb{Z}_2$ or $\mathbb{Z}_3$, and $n\leq 4$.

Let $n=4$. It is easy to check that the graph $K_{3,3}$ is a subgraph induced by $\{e_{1},e_{2},e_{1}+e_{2},e_{3},e_{4},e_{3}+e_{4}\}$.
Let $G=TD(\mathbb{Z}_3,3)$. Let $H$ be the subgraph of $G$ induced by $\{ e_1,e_2,e_3,2e_1,2e_2,2e_3, e_2+e_3,e_1+e_3,e_1+e_2 \}$. Merge $2e_1$ and $e_2+e_3$, $2e_2$ and $e_1+e_3$, and $2e_3$ and $e_1+e_2$. The new graph is isomorphic to the $K_{3,3}$ graph. Then $G$ cannot be planar.

It is easy to check that the graphs $TD(\mathbb{Z}_2,2)$, $TD(\mathbb{Z}_2,3)$ and $TD(\mathbb{Z}_3,2)$ are planar graphs.
\end{proof}
{}

\end{document}